\newif\ifFourOR
\newtheorem{assumption}{Assumption}
\newtheorem{condition}{Condition}
\newtheorem{proposition}{Proposition}
\newtheorem{theorem}{Theorem}
\newtheorem{lemma}{Lemma}
\newcommand{\Sy}{{\mathscr S}_n}
\newcommand{\sdp}{{\mathscr S}_n^{+}}
\newcommand{\sdn}{{\mathscr S}_n^{-}}
\newcommand{\Aat}{{\mathscr A}^{\top}}
\newcommand{\Aa}{{\mathscr A}}
\newcommand{\RR}{\mathbb{R}}
\newcommand{\ppp}[1]{\tilde{#1}}
\newcommand{\Diag}{{\rm Diag}}
\newcommand{\trace}{{\rm trace}}
\newcommand{\ignore}[1]{}
\newcommand\R{{\mathbb R}}
\def\ignore#1{}
\DeclareMathOperator*{\argmax}{arg\,max}
\newcommand{\cmark}{\ding{51}}
\newcommand{\xmark}{\ding{55}}
\algnewcommand{\Input}[1]{%
    \textbf{Input: }#1}
\journalname{4OR}
\begin{document}

\title{Improving ADMMs for Solving Doubly Nonnegative Programs through Dual 
Factorization 
\thanks{This project has received funding from the European Union's Horizon 2020 
research and innovation programme under the Marie Sk\l odowska-Curie grant 
agreement MINOA No 764759 and the Austrian Science Fund (FWF): I 3199-N31.
}
}

\ifFourOR 

\author{Martina Cerulli \and Marianna De Santis \and Elisabeth Gaar \and Angelika Wiegele}

\titlerunning{Improving ADMMs for Solving DNNs through Dual 
Factorization}        


\institute{
Martina Cerulli 
\at CNRS, LIX - Ecole Polytechnique, Institut Polytechnique de Paris, 1 rue Honoré d'Estienne d'Orves, 91120 Palaiseau, France\\
\email{mcerulli@lix.polytechnique.fr}
\and 
Marianna De Santis 
\at Dipartimento di Ingegneria Informatica Automatica e Gestionale, Sapienza Universit\`{a} di Roma, Via Ariosto, 25, 00185 Roma, Italy\\
\email{marianna.desantis@uniroma1.it}
\and 
Elisabeth Gaar 
\at Institut f\"ur Mathematik, Alpen-Adria-Universit\"at Klagenfurt, Universit\"atsstra{\ss}e 65-67, 9020 Klagenfurt, Austria\\
\email{elisabeth.gaar@aau.at}
\and 
Angelika Wiegele 
\at Institut f\"ur Mathematik, Alpen-Adria-Universit\"at Klagenfurt, Universit\"atsstra{\ss}e 65-67, 9020 Klagenfurt, Austria\\
\email{angelika.wiegele@aau.at}
}

\date{Received: date / Accepted: date}

\else 

\author{
Martina Cerulli\footnote{CNRS, LIX - Ecole Polytechnique, Institut Polytechnique de Paris, 1 rue Honoré d'Estienne d'Orves, 91120 Palaiseau, France, {\tt mcerulli@lix.polytechnique.fr}}
\and Marianna De Santis\footnote{Dipartimento di Ingegneria Informatica Automatica e Gestionale, Sapienza Universit\`{a} di Roma, Via Ariosto, 25, 00185 Roma, Italy, {\tt marianna.desantis@uniroma1.it}}
\and Elisabeth Gaar\footnote{Institut f\"ur Mathematik, Alpen-Adria-Universit\"at Klagenfurt, Universit\"atsstra{\ss}e 65-67, 9020 Klagenfurt, Austria, {\tt elisabeth.gaar@aau.at}}
\and Angelika Wiegele\footnote{Institut f\"ur Mathematik, Alpen-Adria-Universit\"at Klagenfurt, Universit\"atsstra{\ss}e 65-67, 9020 Klagenfurt, Austria, {\tt angelika.wiegele@aau.at}}
}

\date{\today}

\fi 

\maketitle

\begin{abstract} 
\ifFourOR 
\par\noindent
\else 
\fi 

Alternating direction methods of multipliers (ADMMs) are popular approaches to
handle large scale semidefinite programs that gained attention during the past decade.
In this paper, we focus on solving doubly nonnegative programs (DNN), which are semidefinite programs where the elements of the matrix variable 
are constrained to be nonnegative. Starting from two algorithms already proposed in
the literature on conic programming, we introduce two new ADMMs
by employing a factorization of the dual variable. 

It is well known that first order methods are not suitable to compute high precision optimal solutions, however an optimal solution of moderate precision often suffices to get high quality lower bounds on the primal optimal objective function value. 
We present methods to obtain such bounds by either perturbing the dual objective function value or by constructing a dual feasible solution from a dual approximate optimal solution.
Both procedures can be used as a post-processing phase in our ADMMs.

Numerical results for DNNs that are relaxations of the stable set problem are presented. They show the impact of using the factorization of the dual variable in order to improve the progress towards the optimal solution within an iteration of the ADMM. This decreases the number of iterations as well as the CPU time to solve the DNN to a given precision. 
The experiments also demonstrate that within a computationally cheap post-processing, we can compute bounds that are close to the optimal value even if the DNN was solved to moderate precision only. 
This makes ADMMs applicable also within a branch-and-bound algorithm.
\end{abstract}

\section{Introduction}\label{sec:intro}
In a semidefinite program (SDP) one wants to find a positive semidefinite (and 
hence symmetric) matrix such that linear -- in the entries of the matrix -- 
constraints are fulfilled and a linear objective function is minimized. If the 
matrix is also required to be entrywise nonnegative, the problem is called 
doubly nonnegative program (DNN). Since interior point methods fail (in terms 
of time and memory required) when the scale of the SDP is big, augmented 
Lagrangian approaches became more and more popular to solve this class of 
programs. 
Wen, 
Goldfarb 
and Yin~\cite{Wen2010} as well as Malick, Povh, Rendl and Wiegele~\cite{MaPoReWi:09} and De~Santis, Rendl and 
Wiegele~\cite{deSaReWie:2018} 
considered alternating direction methods of 
multipliers (ADMMs) to solve SDPs.
One can directly apply these ADMMs to solve DNNs, too, by
introducing nonnegative slack 
variables for the nonnegativity constraints in order to obtain equality constraints only.
However, this increases the size of the problem significantly.

In this paper, we first present two ADMMs already proposed in the literature 
(namely \texttt{ConicADMM3c} by Sun, Toh and Yang~\cite{Sun2015AC3} and \texttt{ADAL+}~\cite{Wen2010}) to specifically solve  DNNs. Then
we introduce two new methods: \texttt{DADMM3c}, which employs a factorization of the dual matrix to avoid spectral 
decompositions, and \texttt{DADAL+} taking advantage of the practical benefits of \texttt{DADAL}~\cite{deSaReWie:2018}. Note that there are examples 
for which a 3-block ADMM (like \texttt{DADAL+}) diverges. However, the question of convergence of 3-block ADMMs for SDP relaxations arising from combinatorial optimization problems is still open.

In case the DNN is used as relaxation of some combinatorial optimization problem, one is interested in dual bounds, i.e.\ bounds that are the dual objective function value of a dual feasible solution. 
In case of a minimization problem this is a lower bound, in case of a maximization problem an upper bound.
Having bounds is in particular important if one intends to use the relaxation within a branch-and-bound algorithm. 
This, however, means that one needs to solve the DNN to high precision such that the dual solution is feasible and hence the dual objective function value is a reliable bound. Typically, first order methods can compute solutions of moderate precision in reasonable time, whereas progressing to higher precision can become expensive. 
To overcome this drawback, we present two methods to compute a dual bound from a solution obtained by the ADMMs within a post-processing phase. 

In the following section we state our notations and introduce the 
formulation of standard primal-dual SDPs and DNNs. In Section~\ref{sec:admm} we 
go through the two existing ADMMs for DNNs we mentioned before, and in 
Section~\ref{sec:Zfactorization} we introduce the tool of dual matrix 
factorization used in the new ADMMs \texttt{DADAL+} and \texttt{DADMM3c} presented later in 
the same section. 
In Section~\ref{sec:SafeBounds} we present two methods for obtaining dual bounds from a solution of a DNN that satisfies the optimality criteria to moderate precision only. 
Section~\ref{sec:numres} shows numerical results for 
instances of DNN relaxations of the stable set problem. 
We evaluate the impact of the dual factorization within the methods as well as the two post-processing schemes for obtaining dual bounds.  
Section~\ref{sec:conclusions} concludes the paper.

\subsection{Problem Formulation and Notations}
Let $\Sy$ be the set of $n$-by-$n$ symmetric matrices,  $\sdp~\subset~\Sy$ be the set of positive semidefinite matrices and $\Sy^{-}~\subset~\Sy$ be the set of negative semidefinite matrices.
Denoting by $\left\langle X,Y\right\rangle = \trace(XY)$ the standard inner product in $\Sy$,
we write the standard primal-dual pair of SDPs as 
\begin{equation}
\begin{aligned}
&\min \,       && \left\langle C,X\right\rangle\\
&\mbox{ s.t. } &&  \Aa X = b \\
&              && X\in \sdp
\end{aligned}
\label{eq:primal}
\end{equation}
and
\begin{equation}
\begin{aligned}
&\max \,       && b^Ty\\
&\mbox{ s.t. } && \Aat y +Z = C \\
&              && Z\in \sdp,
\end{aligned}
\label{eq:dual}
\end{equation}
where $C\in\Sy$, $b\in\RR^m$, $\Aa: \Sy \rightarrow \RR^m$ is the linear operator 
$(\Aa X)_i = \left\langle A_i,X\right\rangle$ with $A_i\in\Sy$, $i=1,\ldots,m$ and 
$\Aat: \RR^m \rightarrow \Sy$ is its adjoint operator, so $\Aat y = \sum_i y_i A_i$
for $y \in \RR^m$.

When in the primal SDP~\eqref{eq:primal} the elements of $X$ are constrained to be nonnegative, then the SDP is called a doubly nonnegative program (DNN). 
To be more precise the primal DNN is given as 
\begin{equation}
\begin{aligned}
&\min  \,      && \left\langle C,X\right\rangle\\
&\mbox{ s.t. } && \Aa X = b \\
&              && X\in \sdp, \quad X\geq 0.
\end{aligned}
\label{eq:primaldnn}
\end{equation}
Introducing $S$ as the dual variable related to the 
nonnegativity constraint $X\geq 0$, we write the dual of the DNN~\eqref{eq:primaldnn} as 
\begin{equation}
\begin{aligned}
&\max  \,      && b^Ty\\
&\mbox{ s.t. } && \Aat y + Z + S = C \\
&              && Z\in \sdp, \quad S \in \Sy, \quad  S\geq 0.
\end{aligned}
\label{eq:dualdnn}
\end{equation}

We assume that both the primal DNN~\eqref{eq:primaldnn} and the dual DNN~\eqref{eq:dualdnn} have strictly feasible points (i.e. Slater's condition is satisfied), so strong duality holds.
Under this assumption, $(y, S, Z, X)$ is optimal for~\eqref{eq:primaldnn} and~\eqref{eq:dualdnn} if and only if
\begin{equation}\label{eq:optCond}
\begin{aligned}
  \Aa X &= b, \quad& \Aat y +Z + S &= C,\quad&   ZX &= 0, \\
  X &\in \sdp, \quad& Z &\in \sdp, \quad& \left\langle S,X\right\rangle &= 0, \\
  X &\geq 0, \quad& S \in \Sy, \quad S&\geq 0,\\ 
\end{aligned}
\end{equation}
hold. We further assume that the constraints formed through the operator $\Aa$ are linearly independent.

Let $v\in \RR^n$ and $M\in \RR^{m\times n}$. In the following,
$M(i,:)$ is defined as the i-th row of $M$ and $M(:,j)$ as the j-th column of $M$.
Further we denote by $\Diag{(v)}$ the diagonal matrix having 
$v$ on the main diagonal. The vector $e_i$ is defined as the $i$-th vector of the standard basis in $\RR^n$. 
Whenever a norm is used, we consider the Frobenius norm in case of matrices and the Euclidean norm 
in case of vectors.
Let $S \in \Sy$. 
We denote the projection of $S$ onto
the positive semidefinite and negative semidefinite cone by $(S)_{+}$ and $(S)_{-}$, respectively.
The projection of $S$ onto the nonnegative orthant is denoted by $(S)_{\ge 0}$. 
Moreover we denote by $\lambda(S)$ the vector of the eigenvalues of $S$ and by $\lambda_{\min}(S)$ and $\lambda_{\max}(S)$ the smallest and largest eigenvalue of $S$, respectively.

\section{ADMMs for Doubly Nonnegative Programs}\label{sec:admm}
In this section, we present two different ADMMs for solving DNNs.
Let $X\in \Sy$ be the Lagrange multiplier for the dual equation 
$\Aat y + Z + S -C = 0$ and $\sigma>0$ be fixed. 
Then the augmented Lagrangian of the dual DNN~\eqref{eq:dualdnn} is defined as
$$
L_\sigma(y,S,Z; X) = b^Ty - \langle \Aat y + Z + S - C, X\rangle -
\frac{\sigma}{2}\|\Aat y + Z + S - C\|^2.$$
In the classical augmented Lagrangian method applied to the dual DNN~\eqref{eq:dualdnn}
the problem  
\begin{equation}\label{eq:maxLagZ}
 \begin{aligned}
      &\max  \, && L_\sigma(y,S,Z; X) \\
      &\mbox{ s.t. } && y\in \RR^m, \quad S \in \Sy, \quad S\geq0, \quad Z\in \sdp,
\end{aligned}
\end{equation}
where $X$ is fixed and $\sigma>0$ is a penalty parameter is addressed at every iteration.

Once Problem~\eqref{eq:maxLagZ} is (approximately) solved, the multiplier $X$ is updated by the first order rule
\begin{equation}\label{eq:firstOrderUpdate}
 X = X + \sigma (\Aat y + Z +S - C)
\end{equation}
and the process is iterated until convergence, i.e., until the optimality conditions~\eqref{eq:optCond} are satisfied
within a certain tolerance (see~\cite[Chapter 2]{Be:82} for further details).

If the augmented Lagrangian $L_\sigma(y,S,Z; X)$ is maximized 
with respect to $y$, $S$ and $Z$ not simultaneously but one after the other, this yields the well known
alternating direction method of multipliers (ADMM). 
The number of blocks of an ADMM is the number of blocks of variables for which Problem~\eqref{eq:maxLagZ} is maximized separately, so we consider a 3-block ADMM.
Such an ADMM has been specialized and used by 
Wen, Goldfarb and Yin~\cite{Wen2010} to address DNNs and in the following we refer to this method as
\texttt{ADAL+}. Details will be given in Section~\ref{sec:adal+}.
Even though in all our numerical tests this algorithm reaches the desired precision of our stopping criteria, it has been recently shown in~\cite{chen2016direct}
that an ADMM with more than two blocks may diverge. 

In order to overcome this theoretical issue, Sun, Toh and Yang~\cite{Sun2015AC3}
proposed to update the third block twice per iteration, or, in other words, to maximize $L_\sigma(y,S,Z; X)$
with respect to the variable $y$ two times in one iteration.
Their algorithm, named \texttt{ConicADMM3c} and detailed in Section~\ref{sec:ConicADMM3c}, is the first theoretically convergent 3-block ADMM proposed in the context of conic programming.

\subsection{\texttt{ADAL+}}\label{sec:adal+}
In the following, we refer to the ADMM presented by Wen, Goldfarb and Yin in~\cite{Wen2010} and applied to the dual DNN~\eqref{eq:dualdnn} as \texttt{ADAL+}.
As already mentioned, \texttt{ADAL+} iterates the maximization of the augmented Lagrangian with respect to each block of dual variables. 
To be more precise the new point $(y^{k+1},S^{k+1},Z^{k+1},X^{k+1})$ is computed by the following steps:
\begin{align} \label{eq:opty}
 y^{k+1} &= \argmax_{y\in \RR^m} L_{\sigma^k}(y,S^k,Z^{k}; X^{k}), \\
 \label{eq:optS}
 S^{k+1} &= \argmax_{S \in \Sy, S\geq 0} L_{\sigma^k}(y^{k+1},S,Z^k; X^{k}), \\
\label{eq:optZ}
 Z^{k+1} &= \argmax_{Z\in \sdp} L_{\sigma^k}(y^{k+1},S^{k+1},Z; X^{k}),\\
 \label{eq:updateX}
 X^{k+1} &= X^{k} + \sigma^k ( \Aat y^{k+1} + Z^{k+1} +S^{k+1} - C).
\end{align}
\\
The update of $y$ in~\eqref{eq:opty} is derived from the first-order optimality condition of the problem on the right-hand side of~\eqref{eq:opty},  so 
$y^{k+1}$ 
is the unique solution of
\[
\nabla_y L_{\sigma^k} (y,S^{k},Z^{k}; X^{k}) = b - \Aa( X^k + \sigma^k (\Aat y + Z^k +S^k - C)) = 0,
\]
that is
\[
y^{k+1}= (\Aa\Aat)^{-1}\Big(\frac{1}{\sigma^k} b - \Aa(\frac{1}{\sigma^k} X^k + Z^k +S^k - C)\Big).
\]
As shown in~\cite{Wen2010}, the update of $S$ according to~\eqref{eq:optS} 
is equivalent to 
\[ \min_{S\in \Sy, S\geq 0} \|S - U^{k+1}\|^2,\]
where $U^{k+1} = C - \Aat y^{k+1} -Z^k -\frac{1}{\sigma^k} X^k$. Hence, $S^{k+1}$ is obtained as the projection 
of $U^{k+1}$ onto the nonnegative orthant, namely
\[S^{k+1}= \big(U^{k+1}\big)_{\ge 0} = 
\big(C - \Aat y^{k+1} -Z^k -\frac{1}{\sigma^k} X^k\big)_{\ge 0}.\]
Then, the update of $Z$ in~\eqref{eq:optZ} is conducted by considering the equivalent problem
\begin{equation}\label{eq:proj}
 \min_{Z\in \sdp} \|Z + W^{k+1}\|^2,
\end{equation}
with $W^{k+1} = (\frac{1}{\sigma^k}X^k - C + \Aat y^{k+1} +S^{k+1})$, or, in other words, by projecting $W^{k+1}\in \Sy$
onto the (closed convex) cone $\sdn$ and taking its additive inverse (see Algorithm~\ref{alg:ADAL+}). 
Such a projection is computed via the spectral decomposition of the matrix $W^{k+1}$.

Finally, it is easy to see that the update of $X$ in~\eqref{eq:updateX} can be performed considering the projection of $W^{k+1}\in \Sy$ 
onto $\sdp$ multiplied by $\sigma^k$, namely
\begin{align*}
 X^{k+1} & = X^k + \sigma^k (\Aat y^{k+1} + Z^{k+1} +S^{k+1} - C ) = \\
 & = \sigma^k(X^k/\sigma^k - C + \Aat y^{k+1} +S^{k+1} - (X^k/\sigma^k - C + \Aat y^{k+1} +S^{k+1})_-) = \\
 & = \sigma^k(X^k/\sigma^k - C + \Aat y^{k+1} +S^{k+1})_+.
\end{align*}
We report in Algorithm~\ref{alg:ADAL+} the scheme of \texttt{ADAL+}.
\begin{algorithm}[ht]
    \caption{Scheme of \texttt{ADAL+} from~\cite{Wen2010}}
    \label{alg:ADAL+}
    \begin{algorithmic}[1]
        \State Choose $\sigma>0$, $\varepsilon >0$, $X\in \sdp$, $Z\in \sdp$, 
        $S \in \Sy$ with $S\geq 0$
        \State $\delta = \max \{ r_P, r_D, r_{PP}, r_{CS}\}$
        \While{$\delta > \varepsilon$}
        \State $y= (\Aa\Aat)^{-1}\Big(\frac{1}{\sigma} b - \Aa(\frac 1 
        \sigma X -C + Z +S)\Big)$ \label{yupdate}
        \State $S= (C - \Aat y -Z -\frac 1 \sigma X)_{\ge 0}$ 
        \State $Z =-(X/\sigma -C + \Aat y +S)_-$ and $ X 
        =\sigma(X/\sigma -C + \Aat y +S)_+$
        \State $\delta = \max \{ r_P, r_D, r_{PP}, r_{CS}\}$
        \State Update $\sigma$ 
        \EndWhile
    \end{algorithmic}
\end{algorithm}

%
The stopping criterion of \texttt{ADAL+} considers the following errors
%
\begin{equation*}
    \begin{aligned}
 r_P&= \frac{\|\Aa X - b\|}{1+\|b\|}, \quad \quad &
 r_D&= \frac{\|\Aat y + Z + S - C\|}{1+ \|C\|}, \\
r_{PP} &= \frac{\|X - (X)_{\ge 0}\|}{1+\|X\|}, \quad\quad &
 r_{CS} &= \frac{|\left\langle S,X\right\rangle |}{1+\|X\|+\|S\|},
    \end{aligned}
\end{equation*}
related to primal feasibility ($\Aa X = b$, $X\geq 0$), dual feasibility ($\Aat y + Z + S = C$)
and complementarity  
condition ($\left\langle S,X\right\rangle = 0$).
More precisely,
the algorithm stops as soon as the quantity
\[
\delta = \max \{ r_P, r_D, r_{PP}, r_{CS}\}
\]
is less than a fixed precision $\varepsilon>0$.
 
The other optimality conditions (namely $X\in \sdp$, $Z\in \sdp$, $S \in \Sy$, $S\geq0$, $ZX=0$) 
are satisfied up to machine accuracy throughout the algorithm thanks to the projections employed in \texttt{ADAL+}.

\subsection{\texttt{ConicADMM3c}}\label{sec:ConicADMM3c}
A major drawback of \texttt{ADAL+} is that it is not necessarily convergent.
By considering two updates of the variable $y$ within one iteration, Sun, Toh and Yang are able to prove that the
algorithm \texttt{ConicADMM3c} proposed in~\cite{Sun2015AC3} and detailed in Algorithm~\ref{alg:ConicADMM3c} 
is a 3-block convergent ADMM: 
Under certain assumptions, they show that the sequence $\{(y^k, S^k, Z^k; X^k)\}$ 
produced by \texttt{ConicADMM3c} converges to a KKT point of the primal DNN~\eqref{eq:primaldnn} and the dual DNN~\eqref{eq:dualdnn}. 
Note that also the order of the updates on the blocks of variables is different with respect to \texttt{ADAL+}. 
The convergence analysis is based on the fact  
that \texttt{ConicADMM3c} is equivalent to a  semi-proximal ADMM.

With respect to \texttt{ADAL+}, \texttt{ConicADMM3c} has the drawback that fewer 
optimality conditions are satisfied up to machine accuracy throughout the algorithm.
Additionally to $r_P, r_D, r_{PP}$ and $r_{CS}$, the stopping criterion of \texttt{ConicADMM3c} has to take into account the errors
\[ r_{PD} =  \frac{\|(-X)_+\|}{1+\|X\|} \quad \text{ and } \quad  r_{CZ} =  \frac{\|\left\langle Z,X\right\rangle \|}{1+\|X\|+\|Z\|},\]
related to the primal feasibility $X\in \sdp$ and the complementarity condition $ZX = 0$. 
In fact, as the second update of $y$ is performed after the update of $Z$, the spectral decomposition 
of $W^{k+1}$ cannot be used to update $X$ as in \texttt{ADAL+} and both the complementarity condition $ZX=0$ and the positive semidefiniteness of $X$ 
are not satisfied by construction.
(We will give a summary on the conditions satisfied throughout the algorithms in Table~\ref{tab:opt_cond_true} in a subsequent section.)
From a computational point of view this slows down the convergence of the scheme, which will be confirmed in our computational evaluation in Section~\ref{sec:numres}. 

\begin{algorithm}[ht]
    \caption{Scheme of \texttt{ConicADMM3c} from~\cite{Sun2015AC3}}
    \label{alg:ConicADMM3c}
    \begin{algorithmic}[1]
        \State Choose $\sigma>0$, $\varepsilon >0$, $X\in \sdp$, $Z\in \sdp$, 
        $S \in \Sy$ with $S\geq 0$ 
        \State $\delta = \max \{ r_P, r_D, r_{PP}, r_{PD}, r_{CS}, 
        r_{CZ}\}$
        \While{$\delta > \varepsilon$}
        \State $Z =-(X/\sigma -C + \Aat y +S)_-$
        \State $ y= (\Aa\Aat)^{-1}\Big(\frac{1}{\sigma} b - \Aa(\frac 1 
        \sigma X -C + Z +S)\Big)$ 
        \State $S= (C - \Aat y -Z -\frac 1 \sigma X)_{\ge 0}$ 
        \State $ y= (\Aa\Aat)^{-1}\Big(\frac{1}{\sigma} b - \Aa(\frac 1 
        \sigma X -C + Z +S)\Big)$ 
        \State  $X = X + \sigma (\Aat y + Z + S -C)$
        \State $\delta = \max \{ r_P, r_D, r_{PP}, r_{PD}, r_{CS}, 
        r_{CZ}\}$
        \State Update $\sigma$ 
        \EndWhile
    \end{algorithmic}
\end{algorithm}

\section{Dual Matrix Factorization}\label{sec:Zfactorization}
In this section, we present our new variants of \texttt{ADAL+} and \texttt{ConicADMM3c}, namely \texttt{DADAL+} and \texttt{DADMM3c}, 
where a factorization of the dual variable $Z$ is employed.
We adapt the method introduced by De~Santis, Rendl and Wiegele in~\cite{deSaReWie:2018}.
In particular, we look at the augmented Lagrangian problem where the positive semidefinite constraint on the dual matrix $Z$ is eliminated by considering the factorization $Z=VV^\top$. 
To be more precise, in each iteration of the ADMMs for fixed $X$, we focus on 
the problem
\begin{equation}\label{eq:maxLagV}
 \begin{aligned}
&\max \, && L_\sigma (y,S,V; X)\\
&\mbox{ s.t. } && y\in \RR^m, \quad S \in \Sy, \quad S\geq 0, \quad V\in  \RR^{n \times r},
\end{aligned}
\end{equation}
where \[L_\sigma (y,S,V; X) =  b^Ty - \langle \Aat y +  VV^\top + S - C, X\rangle -
\frac{\sigma}{2}\|\Aat y + VV^\top +S - C \|^2.\]
Compared to~\eqref{eq:maxLagZ} the constraint $Z \in \sdp$ is replaced by $Z=VV^\top$ 
for some $V\in  \RR^{n \times r}$, so $Z \in  \sdp$ is fulfilled automatically.
Note that the number of columns $r$ of the matrix $V$ represents the rank of $Z$.

The use of the factorization of the dual variable in \texttt{ADAL+} should improve the numerical performance of the algorithm 
when dealing with structured DNNs, as it happens in the comparison of the algorithm \texttt{DADAL} with \texttt{ADAL} when dealing 
with structured SDPs~\cite{deSaReWie:2018}.
For what concerns \texttt{ConicADMM3c}, we will see in Section~\ref{sec:DADMM3c} that using the factorization of the dual variable 
allows to avoid any spectral decomposition 
along the iterations of the algorithm, without compromising the theoretical convergence of the method.

Note that Problem~\eqref{eq:maxLagV} is unconstrained with respect to the variables $y$ and $V$. In particular, the following holds.
\begin{proposition}\label{prop:NecOptCond}
Let $(y^*, S^*, V^*)\in \RR^m \times \Sy \times \RR^{n \times r}$ be a stationary point 
of~\eqref{eq:maxLagV}, then
\begin{equation}\label{eq:OptCondLag}
 \begin{aligned}
  \nabla_y L_\sigma (y^*,S^*,V^*; X) &= b - \Aa( X + \sigma (\Aat y^* + {V^*V^*}^\top + S -C)) = 0
  \text{ and }\\
  \nabla_V L_\sigma (y^*,S^*,V^*; X) &= -2 (X + \sigma (\Aat y^* + {V^*V^*}^\top + S-C))V^* = 0.
 \end{aligned}
\end{equation}
\end{proposition}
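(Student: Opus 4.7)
The proposition asserts that the two gradient equations characterize stationarity, so the plan is simply a matrix-calculus exercise applied to the augmented Lagrangian
\[
L_\sigma(y,S,V;X) = b^\top y - \langle \Aat y + VV^\top + S - C, X\rangle - \frac{\sigma}{2}\|\Aat y + VV^\top + S - C\|^2.
\]
First I would observe that the feasible set in~\eqref{eq:maxLagV} is a product in which the $y$ and $V$ components are unconstrained (the only restriction, $S\geq 0$, acts on the $S$ block). Consequently, at any stationary point $(y^*, S^*, V^*)$ the partial gradients $\nabla_y L_\sigma$ and $\nabla_V L_\sigma$ must vanish. So the task reduces to computing these two gradients and matching them against the stated expressions.

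For the $y$-derivative the computation is essentially linear. The three summands of $L_\sigma$ contribute $b$, $-\Aa X$, and $-\sigma\,\Aa(\Aat y + VV^\top + S - C)$ respectively, using that the adjoint of the linear operator $y \mapsto \Aat y$ is $\Aa$ together with the chain rule applied to the squared Frobenius norm. Collecting terms yields exactly
\[
\nabla_y L_\sigma(y,S,V;X) = b - \Aa\bigl(X + \sigma(\Aat y + VV^\top + S - C)\bigr),
\]
which gives the first equation in~\eqref{eq:OptCondLag} upon setting the gradient to zero.

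The $V$-derivative is the more delicate step and I expect it to be the main obstacle, because $VV^\top$ is nonlinear in $V$ and symmetric, so differentiation produces two mirror-image contributions that must be combined. The plan is to compute the differential: writing $R = \Aat y + S - C$, which is symmetric, and expanding
\[
d\,\langle VV^\top, X\rangle = \langle X, dV\cdot V^\top + V\cdot dV^\top\rangle = 2\langle XV, dV\rangle
\]
using $X=X^\top$ and cyclicity of the trace. An analogous expansion of $d\bigl(\tfrac{1}{2}\|VV^\top + R\|^2\bigr)$, exploiting the symmetry of $VV^\top + R$, yields $2\langle (VV^\top + R)V, dV\rangle$. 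Multiplying by the outer factors of $-1$ and $-\sigma$ and summing, both contributions factor a common $V$ on the right, so
\[
\nabla_V L_\sigma(y,S,V;X) = -2\bigl(X + \sigma(\Aat y + VV^\top + S - C)\bigr)V,
\]
matching the second equation in~\eqref{eq:OptCondLag}. Setting both gradients to zero at $(y^*,S^*,V^*)$ concludes the proof.
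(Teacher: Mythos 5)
Your proposal is correct: the paper states Proposition~\ref{prop:NecOptCond} without proof, treating it as a routine consequence of the fact that $y$ and $V$ are unconstrained in~\eqref{eq:maxLagV}, and your argument supplies exactly that intended computation (linearity and the adjoint $\Aa$ for the $y$-gradient, and the symmetric product rule for $VV^\top$ giving the factor $2(\cdot)V$ for the $V$-gradient). Both gradient formulas check out, so nothing further is needed.
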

Proposition~\ref{prop:NecOptCond} implies that fulfilling the necessary optimality conditions with respect to $y$
is equivalent to solve one system of linear equations.

As in~\cite{deSaReWie:2018}, we consider Algorithm~\ref{alg:OneItUncMax} 
in order to update $y$ and $V$ (and hence $Z$) for fixed $S$ and $X$. 
In particular in Algorithm~\ref{alg:OneItUncMax}, starting from $(y,S,V; X)$, we move 
$V$ along an ascent direction $D_V\in  \RR^{n \times r}$ with a stepsize $\alpha$.
While doing this, we update $y$ in such a way that we keep its optimality 
conditions of~\eqref{eq:maxLagV} satisfied, so  
$\nabla_y L_\sigma (y,S,V+\alpha D_V; X) = 0$ holds for the updated $y$
(see~\cite[Proposition~2]{deSaReWie:2018}). 
We stop as soon as the necessary optimality 
conditions with respect to $V$ (see Proposition~\ref{prop:NecOptCond}) are fulfilled
to a certain precision.

As in the algorithm \texttt{DADAL} presented in~\cite{deSaReWie:2018}, in our implementation we set $D_V$
either to the gradient of $L_\sigma (y,S,V; X)$ or to the gradient scaled with 
the inverse of the diagonal of the Hessian of $L_\sigma (y,S,V; X)$.
In order to determine a stepsize $\alpha$, at Step~\ref{alg:linesearch} in Algorithm~\ref{alg:OneItUncMax} 
we could perform an exact linesearch to maximize 
$L_\sigma (y(V + \alpha D_{V}),S, V + \alpha D_{V}; X)$ with respect to $\alpha$. 
This is a polynomial of degree 4 
in $\alpha$, so we can interpolate it from five different points in order to get its analytical expression and by this determining the maximizer explicitly. 
In practice we evaluate  $L_\sigma(y(V + \alpha D_{V}),S, V + \alpha D_{V}; X)$ for 1000 different values of $\alpha \in (0,10)$ and take the $\alpha$ 
corresponding to the maximum value of $L_\sigma$. 
 
\begin{algorithm}[ht]
    \caption{Update of $(y,V)$ for factorization $Z=VV^\top$}
    \label{alg:OneItUncMax}
    \Input{$\sigma>0$, $X \in \sdp$, $y\in \RR^m$, $V\in \RR^{n\times r}$, 
        $S \in \Sy$ with $S\geq 0$}
    \begin{algorithmic}[1]
        \State Choose $\varepsilon_{inner}>0$
        \While{$\| \nabla_{V} L_{\sigma}(y,S,V; X) \|<\varepsilon_{inner}$}
        \State Compute ascent direction $D_V\in
        \RR^{n\times r}$ 
        \State Compute stepsize $\alpha$ \label{alg:linesearch}
        \State  $y = y(V + \alpha D_V)$ and $V = V + \alpha D_V$
        \EndWhile
    \end{algorithmic}
\end{algorithm}

As output of Algorithm~\ref{alg:OneItUncMax}, we get 
$y$ and $V$ (and therefore also $Z = VV^\top$) that have been updated through the maximization
of the augmented Lagrangian~\eqref{eq:maxLagV} with respect to $V$. This leads to a new point $(y,S,V; X)$. 

This update can be used within \texttt{ADAL+} and \texttt{ConicADMM3c} as detailed in the following.

\subsection{\texttt{DADAL+}}
First we consider \texttt{DADAL+}, our version of \texttt{ADAL+} where the use of the factorization of the dual variable~$Z$ leads to a double update of $Z$. 
As a further enhancement of the algorithm \texttt{ADAL+} devised in~\cite{Wen2010}, we 
propose to perform also a double update of the dual variable $y$.

To be more precise, we replace the first update of $y$ in \texttt{ADAL+} with a update 
of $y$ and $V$ with Algorithm~\ref{alg:OneItUncMax} in \texttt{DADAL+}.
Furthermore in \texttt{DADAL+} we update $y$ not only before, but also a second time after the computation of $S$.
This second update is performed by 
applying the closed formula solution of the maximization problem in~\eqref{eq:opty}.
Note that the second update of $y$ is performed before the update of $Z$ so that by computing
the spectral decomposition of $W = X/\sigma -C +\Aat y + S$, we can simultaneously update $Z$ and $X$ and both the 
complementarity condition $ZX=0$ and the positive semidefiniteness of $X$ are satisfied
 up to machine accuracy throughout the algorithm in the same way it is the case in \texttt{ADAL+}.
The scheme of \texttt{DADAL+} is detailed in Algorithm~\ref{alg:BPMComb}.
\begin{algorithm}[ht]
    \caption{Scheme of \texttt{DADAL+}}
    \label{alg:BPMComb}
    \begin{algorithmic}[1]
        \State Choose $\sigma>0$, $r>0$, $\varepsilon >0$, $X\in \sdp$, $S \in 
        \Sy$ with $S\geq 0$, $V\in 
        \R^{n\times r}$, $y\in \R^m$
        \State $Z=VV^{\top}$
        \State $\delta = \max \{ r_P, r_D, r_{PP}, r_{CS}\}$
        \While{$\delta > \varepsilon$}
        \State Update $(y,V)$ with Algorithm~\ref{alg:OneItUncMax}
        \State $Z = VV^\top$
        \State $S =  (C - \Aat y -Z -\frac 1 \sigma X)_{\ge 0} $
        \State $y= (\Aa\Aat)^{-1}\Big(\frac{1}{\sigma} b - \Aa(\frac 1 
        \sigma X -C + Z +S)\Big)$ 
        \State $Z = - (X/\sigma -C +\Aat y  + S)_-$ and $X 
        =\sigma(X/\sigma - C + \Aat y   + S)_+$
        \State $r = \mathrm{rank(Z)}$
        \State Update $V$ such that $VV^\top = Z$
        \State $\delta =  \max \{ r_P, r_D, r_{PP}, r_{CS}\}$
        \State Update $\sigma$ 
        \EndWhile
    \end{algorithmic}
\end{algorithm}

\subsection{\texttt{DADMM3c}}\label{sec:DADMM3c}
We now investigate the use of the dual factorization within the 
algorithm \texttt{ConicADMM3c} and call the modified algorithm \texttt{DADMM3c}.
In \texttt{ConicADMM3c}, the effort spent to compute the spectral decomposition of $W=X/\sigma -C +\Aat y + S$ is not that well exploited 
as it is used to update only the dual matrix $Z$ but not the primal matrix $X$.
Hence in \texttt{DADMM3c} we update $Z$ and $y$ by employing the factorization
$Z = VV^\top$ and performing Algorithm~\ref{alg:OneItUncMax}
instead of updating them by a spectral decomposition and a closed formula as it is done in \texttt{ConicADMM3c}.
Note that Algorithm~\ref{alg:OneItUncMax} is able to compute stationary points of Problem~\eqref{eq:maxLagV}, that 
are not necessarily global optima.
However, assuming that the update of $y$ and $V$ at Step~\ref{stepFact} in Algorithm~\ref{alg:DADMM3c} is done such that Problem~\eqref{eq:maxLagV} is solved to optimality, the theoretical convergence of the method is maintained.
Note that the computation of any spectral decomposition is avoided. 
The scheme of the algorithm \texttt{DADMM3c} is detailed in 
Algorithm~\ref{alg:DADMM3c}.
\begin{algorithm}[ht]
    \caption{Scheme of \texttt{DADMM3c}}
    \label{alg:DADMM3c}
    \begin{algorithmic}[1]
        \State Choose $\sigma>0$, $r>0$, $\varepsilon >0$,
        $X\in \sdp$, $S \in \Sy$ with $S\geq 0$, $V\in \R^{n\times r}$, $y\in 
        \R^m$
        \State $Z=VV^{\top}$
        
        \State $\delta = \max \{ r_P, r_D, r_{PP}, r_{PD}, r_{CS}, 
        r_{CZ}\}$
        \While{$\delta > \varepsilon$}
        \State Update $(y,V)$ with Algorithm~\ref{alg:OneItUncMax} \label{stepFact}
        \State $Z = VV^\top$
        \State $S= (C - \Aat y -Z -\frac 1 \sigma X)_+$ 
        \State $y= (\Aa\Aat)^{-1}\Big(\frac{1}{\sigma} b - \Aa(\frac 1 
        \sigma X -C + Z +S)\Big)$
        \State $X = X + \sigma (Z + S + \Aat y -C)$
        \State $\delta = \max \{ r_P, r_D, r_{PP}, r_{PD}, r_{CS}, 
        r_{CZ}\}$
        \State Update $\sigma$ 
        \EndWhile
    \end{algorithmic}
\end{algorithm}

A limit of \texttt{DADMM3c} is that the rank of $Z$ is not updated throughout the iterations.
This means that the maximization of $L(y,S,V;X)$ with respect to $V$ is performed keeping $r$ fixed to the initial value that
in our implementation is $n$.
It is still an open question to update the rank of $Z$ in a beneficial way.

On the other hand, note that in \texttt{DADAL+} the rank of $Z$ is determined at every iteration 
through the eigenvalue decomposition in the second update of $Z$.

As already mentioned in Section~\ref{sec:admm}, some of the optimality conditions are satisfied throughout the algorithms \texttt{ADAL+}/\texttt{DADAL+} and \texttt{ConicADMM3c}/\texttt{DADMM3c}. A summary is presented in Table~\ref{tab:opt_cond_true}.

\begin{table}[ht]
    \centering
    \begin{tabular}{l|ccc|ccc|cc}
        & \rotatebox{90}{$\Aa X = b$} & \rotatebox{90}{$X \in \sdp$} & \rotatebox{90}{$X\ge 0$} & \rotatebox{90}{$\Aat y + Z + S = C$} & \rotatebox{90}{$Z \in \sdp$} & \rotatebox{90}{$S\in \Sy$, $S\ge 0$} & \rotatebox{90}{$ ZX = 0$} & \rotatebox{90}{$\langle S,X \rangle = 0$}\\
        \hline
        \begin{tabular}{l} \texttt{ADAL+} \\ \texttt{DADAL+}\end{tabular} & \xmark &  \cmark & \xmark & \xmark & \cmark & \cmark & \cmark & \xmark \\
        \hline
        \begin{tabular}{l} \texttt{ConicADMM3c} \\ \texttt{DADMM3c} \end{tabular} & \xmark & \xmark & \xmark & \xmark & \cmark & \cmark & \xmark & \xmark \\
        \hline
    \end{tabular}
    \caption{Optimality conditions~\eqref{eq:optCond} satisfied through the algorithms by construction are indicated by a checkmark, all others by an x-mark.}
    \label{tab:opt_cond_true}
\end{table}

\section{Computation of Dual Bounds}
\label{sec:SafeBounds}
When solving combinatorial optimization problems, DNN relaxations very often yield high quality bounds. 
These bounds can then be used within a branch-and-bound framework in order to get an exact solution method.
In this section we want to discuss how we can obtain lower bounds on the optimal objective function value of the primal DNN~\eqref{eq:primaldnn} from a dual solution of moderate precision only.

Thanks to weak and strong duality results, the objective function value of every feasible solution of the dual DNN~\eqref{eq:dualdnn} is a lower bound on the optimal objective function value of the primal DNN~\eqref{eq:primaldnn} and the optimal values of the primal and the dual DNN  coincide. 
Therefore every dual feasible solution and in particular the optimal dual solution give rise to a dual bound.

Note that the dual objective function value serves as a dual bound only if the DNN relaxation is solved to high precision. If the DNN is solved to moderate precision, the dual objective function value might not be a bound as the dual solution might be infeasible. However, solving the DNN to high precision comes with enormous computational costs.

So unfortunately \texttt{ADAL+}, \texttt{DADAL+}, \texttt{ConicADMM3c} and
\texttt{DADMM3c} are not suitable to produce a bound fast.
Running an ADMM typically gives approximate optimal solutions rather quickly, while going to optimal solutions with high precision can be very time consuming. 
As the dual constraint $\Aat y + Z + S -C = 0$ does not necessarily hold in every iteration of the four algorithms (see Table~\ref{tab:opt_cond_true}), obtaining a dual feasible solution 
with sufficiently high precision with ADMMs may take extremely long.

To save time, but still ensure that we obtain a dual bound, we will stop the four methods at a certain precision. After that we will use one of two procedures in a post-processing phase in order to obtain a bound. In Section~\ref{sec:errorBound} we will describe how to obtain a bound with a method already presented in the literature.
In Section~\ref{sec:nightjet} we present a new procedure for obtaining a dual feasible solution and hence a bound from an approximate optimal solution.

\subsection{Dual Bounds through Error Bounds}
\label{sec:errorBound}
In this section we present the method to obtain lower bounds on the primal 
optimal value of an SDP of the form~\eqref{eq:primal} introduced by Jansson, Chaykin and 
Keil~\cite{JaChayKeil2007}. We adapt this method for DNNs in order to use it 
in a post-processing phase of the four ADMMs presented above. 
We start with the following lemma from~\cite[Lemma 3.1]{JaChayKeil2007}.

\begin{lemma}Let $Z$, $X$ be symmetric matrices of dimension $n$ that satisfy
\begin{equation}\label{eq:upplow}
    \underline{z} \leq \lambda_{\min}(Z), \quad 0 \leq \lambda_{\min}(X), \quad \lambda_{\max}(X) \leq \bar{x}
\end{equation}
for some $\underline{z}$, $\bar{x} \in \R$.
Then the inequality
\begin{equation*}
    \left\langle Z,X\right\rangle \geq \bar{x}\sum_{k : \lambda_k(Z) <0}\lambda_k(Z) \geq n\bar{x}\min\{0,\underline{z}\}
\end{equation*}
holds.
\end{lemma}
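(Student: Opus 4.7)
The plan is to reduce $\prods{Z}{X}$ to a scalar sum via the spectral decomposition of $Z$ and then bound each summand using the eigenvalue constraints~\eqref{eq:upplow}.

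First I would diagonalize $Z = Q \Lambda \trans Q$ with $\Lambda = \Diag(\lambda_1(Z), \dots, \lambda_n(Z))$ and $Q$ orthogonal, and set $\tilde X = \trans Q X Q$. By the cyclic property of the trace,
\[
\prods{Z}{X} = \trace(\Lambda \tilde X) = \sum_{k=1}^n \lambda_k(Z)\, \tilde X_{kk}.
\]
Because $\tilde X$ is orthogonally similar to $X$, it is positive semidefinite with $\lambda_{\max}(\tilde X) \le \bar x$. The Rayleigh-quotient inequalities, applied to the unit vector $e_k$, give the crucial pointwise bounds $0 \le \tilde X_{kk} \le \bar x$ for every $k$. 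This elementary observation (diagonal entries of a symmetric matrix lie between its extreme eigenvalues) is really the heart of the argument; the rest is careful bookkeeping.

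Next I would split the sum according to the sign of $\lambda_k(Z)$. Summands with $\lambda_k(Z) \ge 0$ are nonnegative by the pointwise bound and can be dropped when forming a lower bound. For $\lambda_k(Z) < 0$, multiplying the inequality $\tilde X_{kk} \le \bar x$ by the negative scalar $\lambda_k(Z)$ reverses it, giving $\lambda_k(Z) \tilde X_{kk} \ge \bar x\, \lambda_k(Z)$. Summing over the indices of negative eigenvalues yields the first claimed inequality $\prods{Z}{X} \ge \bar x \sum_{k:\lambda_k(Z)<0} \lambda_k(Z)$. Implicit here is that $\bar x \ge 0$, which holds since $\bar x \ge \lambda_{\max}(X) \ge 0$ by positive semidefiniteness of $X$.

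For the second inequality I would distinguish two cases. If $\underline z \ge 0$, then $Z$ has no negative eigenvalues, the sum on the left is empty, and $\min\{0,\underline z\} = 0$, so both sides vanish. Otherwise $\underline z < 0$; there are at most $n$ negative eigenvalues, each satisfying $\lambda_k(Z) \ge \underline z$, and multiplying by $\bar x \ge 0$ gives $\bar x \sum_{k:\lambda_k(Z)<0} \lambda_k(Z) \ge n\bar x\, \underline z = n\bar x \min\{0,\underline z\}$. The only genuine pitfall in the whole argument is tracking the direction of inequalities when multiplying by a negative quantity, so I would be explicit about that at each step.
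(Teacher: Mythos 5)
Your proposal is correct and follows essentially the same route as the paper: both diagonalize $Z=Q\Lambda Q^\top$, rewrite $\left\langle Z,X\right\rangle=\sum_k \lambda_k(Z)\,Q(:,k)^\top X Q(:,k)$, and use the bounds $0\le Q(:,k)^\top X Q(:,k)\le\bar{x}$ (your $\tilde X_{kk}$ is exactly this quadratic form). You are merely more explicit than the paper about the sign bookkeeping and the second inequality, which is fine.
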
\label{lemma:lemmatoh}

\begin{proof}
 Let $Z= Q\Lambda Q^\top$ be an eigenvalue decomposition 
 of $Z$ 
 with $QQ^\top=I$ for some $Q \in \R^{n \times n}$ and $\Lambda=\Diag(\lambda(Z)).$
 Then
 \begin{align*}
      \left\langle Z,X\right\rangle & = \trace(Q\Lambda Q^\top X)  = \trace(\Lambda Q^\top X Q) \\
      & = \sum_{k=1}^n \lambda_k(Z) Q(:,k)^\top X Q(:,k).
 \end{align*}
 Because of~\eqref{eq:upplow}, we have $0 \leq Q(:,k)^\top X Q(:,k) \leq \bar{x}.$ Therefore
 $$\left\langle Z,X\right\rangle \geq \bar{x}\sum_{k : \lambda_k(Z) <0}\lambda_k(Z) \geq n\bar{x}\min\{0,\underline{z}\}.$$
\end{proof}

At this point we can present the following theorem of~\cite[Theorem 3.2]{JaChayKeil2007} adapted for DNNs.

\begin{theorem}\label{theorem:th32}
Consider the primal DNN~\eqref{eq:primaldnn}, let $X^*$ be an optimal solution and let $p^*$ be its optimal value. Given $y \in \mathbb{R}^m$ and $S \in \Sy$ with $S \geq 0$, set 
\begin{equation}\label{eq:Ztoh}
\ppp{Z} = C - \Aat y - S
\end{equation}
and suppose that
$\underline{z} \leq \lambda_{\min}(\ppp{Z}).$
Assume $\bar{x} \in \R$ such that $\bar{x} \geq \lambda_{\max}(X^*)$ is known.
Then the inequality
\begin{equation}\label{eq:validbound}
p^* \geq b^\top y + \bar{x}\sum_{k: \lambda_k(\ppp{Z})<0} \lambda_k(\ppp{Z}) \geq  b^\top y + n\bar{x}\min\{0, \underline{z}\}
\end{equation}
holds.
\end{theorem}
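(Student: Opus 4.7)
The plan is to start from the identity $p^* = \langle C, X^*\rangle$ and replace $C$ using the definition~\eqref{eq:Ztoh}, namely $C = \Aat y + S + \ppp{Z}$. This splits the optimal value into three inner products, which I would then bound one at a time.

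First I would handle the term $\langle \Aat y, X^*\rangle$. By the adjointness of $\Aat$ and the primal feasibility of $X^*$ (so $\Aa X^* = b$), this term equals $y^\top \Aa X^* = b^\top y$, producing the leading $b^\top y$ in the desired lower bound. Next I would address $\langle S, X^*\rangle$: since both $S$ and $X^*$ are entrywise nonnegative (the latter because $X^*$ is feasible for the primal DNN~\eqref{eq:primaldnn}), this inner product is nonnegative and can simply be dropped from the equality to obtain an inequality.

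The remaining term $\langle \ppp{Z}, X^*\rangle$ is where Lemma~\ref{lemma:lemmatoh} does the work. I would verify the lemma's hypotheses~\eqref{eq:upplow} with $Z = \ppp{Z}$ and $X = X^*$: the bound $\underline{z} \leq \lambda_{\min}(\ppp{Z})$ is given by assumption, the inequality $0 \leq \lambda_{\min}(X^*)$ follows from $X^* \in \sdp$, and $\lambda_{\max}(X^*) \leq \bar{x}$ is assumed. Applying the lemma then yields $\langle \ppp{Z}, X^*\rangle \geq \bar{x}\sum_{k:\lambda_k(\ppp{Z})<0}\lambda_k(\ppp{Z}) \geq n\bar{x}\min\{0,\underline{z}\}$, which is exactly the chain of inequalities in~\eqref{eq:validbound}.

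Assembling the three pieces gives the claim. There is no real obstacle here; the only subtlety worth stating cleanly is that no positive semidefiniteness of $\ppp{Z}$ is assumed — the whole point of the lemma is to quantify how much the inner product $\langle \ppp{Z}, X^*\rangle$ can drop below zero when $\ppp{Z}$ has negative eigenvalues, and this slack is precisely what appears as the correction term in the bound. Note also that the proof does not need $(y,S)$ to be dual feasible; it is the presence of the correction term that compensates for possible infeasibility of the candidate dual solution, which is exactly what makes the statement useful as a post-processing step after an ADMM that has only reached moderate precision.
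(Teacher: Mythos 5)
Your proposal is correct and follows essentially the same route as the paper: both decompose $\left\langle C,X^*\right\rangle$ via $C=\Aat y+S+\ppp{Z}$, use adjointness and primal feasibility to extract $b^\top y$, discard $\left\langle S,X^*\right\rangle\ge 0$ using entrywise nonnegativity, and invoke Lemma~\ref{lemma:lemmatoh} on $\left\langle \ppp{Z},X^*\right\rangle$. No gaps.
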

\begin{proof}
Let $X^*$ be optimal for the primal DNN~\eqref{eq:primaldnn}. Then
\begin{align*}
\left\langle C,X^*\right\rangle - b^\top y 
&= \left\langle C,X^*\right\rangle - \left\langle \Aa X^*, y\right\rangle = \left\langle C -\Aat y,X^*\right\rangle \\
&= \left\langle \ppp{Z}+ S,X^*\right\rangle = \left\langle \ppp{Z},X^*\right\rangle + \left\langle S,X^*\right\rangle.
\end{align*}
Since $S \geq 0$ and $X^* \geq 0$, the inequality
 $$\left\langle C,X^*\right\rangle \geq  b^\top y + \left\langle \ppp{Z},X^*\right\rangle$$
is satisfied and Lemma~\ref{lemma:lemmatoh} implies
$$p^* = \left\langle C,X^*\right\rangle \geq b^\top y + \left\langle \ppp{Z},X^*\right\rangle \geq  b^\top y + \bar{x}\sum_{k: \lambda_k(\ppp{Z}) <0} \lambda_k(\ppp{Z}) \geq  b^\top y + n \bar{x}\min\{0, \underline{z}\}, $$
which proves~\eqref{eq:validbound}.
\end{proof}

Theorem~\ref{theorem:th32} justifies to compute dual bounds via Algorithm~\ref{alg:errorbound}. 
If the matrix $\ppp{Z}$ defined in~\eqref{eq:Ztoh}
is positive semidefinite, then $(y,\ppp{Z},S)$, is a dual feasible solution and $b^\top y$
is already a bound. 
Otherwise, we decrease the dual objective function value $b^\top y$ of the infeasible point $(y,\ppp{Z},S)$ 
by adding the negative term $\bar{x}\sum\limits_{k: \lambda_k(\ppp{Z}) <0} \lambda_k(\ppp{Z})$ to it.  
In this way, we obtain a bound ($EB$ in Algorithm~\ref{alg:errorbound}) as proved by Theorem~\ref{theorem:th32}. 

Note that for the computation of the bound of Theorem~\ref{theorem:th32} it is not necessary to have a primal optimal solution $X^*$ at hand, only an upper bound on the maximum eigenvalue of an optimal solution is needed. Such an upper bound is known for example if there is an upper bound on the maximum eigenvalue of any feasible solution.

\begin{algorithm}[ht]
    \caption{Scheme for Computing Error Bounds}
    \label{alg:errorbound}
    \Input{$y \in \R^m$, $S \in \Sy$ with $S\geq 0$, 
    $\bar{x}\geq \lambda_{\max}(X^*)$}
    \begin{algorithmic}[1]
        \State $\ppp{Z} = C - \Aat y - S$ 
        \State Compute $\lambda(\ppp{Z})$
        \State $EB = b^\top y + \bar{x}\sum\limits_{k: \lambda_k(\ppp{Z})<0} \lambda_k(\ppp{Z}) $  
        \State \Return $EB$
    \end{algorithmic}
\end{algorithm}

\subsection{Dual Bounds through the Nightjet Procedure}
\label{sec:nightjet}
Next we will present a new procedure to obtain bounds. In 
contrast to the procedure described in the previous section, this approach will also provide a dual feasible 
solution.
The key ingredient to obtain such dual feasible solutions will be the following lemma.

\begin{lemma}\label{lem:NJProcedure}
We consider the primal DNN~\eqref{eq:primaldnn} and the dual DNN~\eqref{eq:dualdnn}.
Let $\ppp{Z} \in \sdp$. 
If 
\begin{equation}\label{eq:LP}
    \max_{y \in \R^m}\{ b^\top y \mid \Aat y \le C - \ppp{Z} \}
\end{equation}
has an optimal solution $\ppp{y}$, let $\ppp{S}= C -\ppp{Z} - \Aat \ppp{y}$. 
Then $(\ppp{y}, \ppp{S}, \ppp{Z})$ is a dual feasible solution. 
If~\eqref{eq:LP} is unbounded, then also~\eqref{eq:dualdnn} is unbounded.
If~\eqref{eq:LP} is infeasible, then there is no dual feasible solution with $\ppp{Z}$.
\end{lemma}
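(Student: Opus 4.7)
The plan is to verify each of the three assertions directly from the constraints of~\eqref{eq:dualdnn}, exploiting the fact that the LP~\eqref{eq:LP} is, up to eliminating the $S$-block, exactly the slice of the dual DNN feasible set obtained by fixing the $Z$-block to $\ppp Z$. The key observation is that the constraint $\Aat y \le C - \ppp Z$, interpreted entrywise, is equivalent to the existence of a nonnegative symmetric matrix $\ppp S$ with $\Aat y + \ppp Z + \ppp S = C$, namely $\ppp S = C - \ppp Z - \Aat y$.

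First I would handle the optimal-solution case. Starting from an optimal $\ppp y$ of~\eqref{eq:LP}, I would set $\ppp S = C - \ppp Z - \Aat \ppp y$ and verify the three requirements of dual feasibility in~\eqref{eq:dualdnn}: (i) $\Aat \ppp y + \ppp Z + \ppp S = C$ by construction; (ii) $\ppp S \in \Sy$ because $C$, $\ppp Z$, and $\Aat \ppp y$ are symmetric, and $\ppp S \geq 0$ precisely because $\Aat \ppp y \le C - \ppp Z$ entrywise; (iii) $\ppp Z \in \sdp$ is assumed. Hence $(\ppp y, \ppp S, \ppp Z)$ is dual feasible.

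Next, for the unbounded case, I would pick a sequence $\{y_k\}\subset\R^m$ with $\Aat y_k \le C - \ppp Z$ and $b^\top y_k \to \infty$. Defining $S_k = C - \ppp Z - \Aat y_k$ as above produces a sequence of dual feasible points $(y_k, S_k, \ppp Z)$ for~\eqref{eq:dualdnn}, whose objective values $b^\top y_k$ diverge to $+\infty$, so~\eqref{eq:dualdnn} itself is unbounded.

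Finally, for the infeasibility case I would argue by contraposition: if some $(y', S', \ppp Z)$ were dual feasible for~\eqref{eq:dualdnn} with the given $\ppp Z$, then $S' = C - \ppp Z - \Aat y' \geq 0$, i.e.\ $\Aat y' \le C - \ppp Z$, so $y'$ would be feasible for~\eqref{eq:LP}, contradicting infeasibility of the LP. There is no real obstacle in this proof; it is essentially bookkeeping between the two formulations. The only subtlety worth being explicit about is that the inequality $\Aat y \le C - \ppp Z$ in~\eqref{eq:LP} is to be read entrywise on symmetric matrices, matching the componentwise nonnegativity constraint $S \ge 0$ in~\eqref{eq:dualdnn}.
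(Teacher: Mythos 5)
Your proposal is correct and follows essentially the same route as the paper's proof: in all three cases it exploits the observation that $\Aat y \le C - \ppp{Z}$ (entrywise) is equivalent to the existence of a symmetric nonnegative $\ppp{S}$ with $\Aat y + \ppp{Z} + \ppp{S} = C$, so the LP is exactly the dual feasible set with $Z$ fixed to $\ppp{Z}$. Your write-up is only slightly more explicit (the diverging sequence for unboundedness and the explicit contrapositive for infeasibility), but there is no substantive difference.
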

\begin{proof}
If~\eqref{eq:LP} has an optimal solution $\ppp{y}$, 
then it is easy to see that 
$\ppp{S} \geq 0$ by construction. 
Furthermore $\ppp{S} \in \Sy$ because $C$, $\ppp{Z}$, $\Aat y \in \Sy$.
Therefore $(\ppp{y}, \ppp{S}, \ppp{Z})$ is a dual feasible solution.
If~\eqref{eq:LP} is unbounded, 
then the same values of $y$ that make the objective function value of~\eqref{eq:LP} 
arbitrarily large can be used to make the objective function value of~\eqref{eq:dualdnn} 
arbitrary large, hence also~\eqref{eq:dualdnn} is unbounded.
Furthermore it is easy to see that~\eqref{eq:LP} is feasible 
if there is a dual feasible solution with $\ppp{Z}$. 
Hence if~\eqref{eq:LP} is infeasible, 
then there is no dual feasible solution with $\ppp{Z}$.
\end{proof}

Let $(y,S,Z,X)$ be any solution (not necessarily feasible)
to the primal DNN~\eqref{eq:primaldnn} and the dual DNN~\eqref{eq:dualdnn}.
In the back of our minds we think of them as the solutions we obtained by 
\texttt{ADAL+}, 
\texttt{DADAL+}, \texttt{ConicADMM3c} or
\texttt{DADMM3c}, so they are close to optimal solutions but not necessarily 
dual or primal feasible.
We want to obtain $\ppp{y}$, $\ppp{S}$ and $\ppp{Z}$ satisfying dual feasibility
\begin{equation}\label{eq:dualfeas}
    \Aat \ppp{y} +\ppp{Z}+\ppp{S}= C, \quad
    \ppp{Z} \in \sdp, \quad
    \ppp{S} \in \Sy, \quad 
    \ppp{S} \ge 0.
\end{equation}

We use Lemma~\ref{lem:NJProcedure} within the Nightjet procedure for obtaining such solutions in the following way.
From the given $Z$ we obtain the new positive semidefinite matrix $\ppp{Z}$ by projecting $Z$ onto the positive semidefinite cone. Then we solve the linear program~\eqref{eq:LP}.

If~\eqref{eq:LP} is infeasible, then we are neither able to construct a feasible 
dual solution nor to construct a dual bound. 
If~\eqref{eq:LP} is unbounded, then also the dual DNN~\eqref{eq:dualdnn} is unbounded and hence the 
primal DNN~\eqref{eq:primaldnn} is not feasible.
If~\eqref{eq:LP} has an optimal solution $\ppp{y}$, 
then we obtain a dual feasible solution 
$(\ppp{y}, \ppp{S}, \ppp{Z})$ with the help of Lemma~\ref{lem:NJProcedure}.
Furthermore the dual objective function value $b^\top \ppp{y}$ is a bound in this case, so we can return a dual feasible solution and a bound.
The Nightjet procedure is detailed in Algorithm~\ref{alg:NJHeuristicGeneral}.
\begin{algorithm}[ht]
    \caption{Scheme of the Nightjet Procedure}
    \label{alg:NJHeuristicGeneral}
    \Input{$Z\in \Sy$}
    \begin{algorithmic}[1]
        \State $\displaystyle \ppp{Z} = (Z)_+$
        \If{ $\{y \in \R^m \mid  \Aat y \le C - \ppp{Z} \} \not = \emptyset $ }
        \State $\displaystyle \ppp{y}= \argmax_{y \in \R^m} 
         \{b^\top y \mid \Aat y \le C - \ppp{Z} \}$  \label{yupdateNJHeuristic}        
        \Else 
        \State \Return ``No dual feasible solution and no bound found''
        \EndIf 
        \State $\ppp{S}= C -\ppp{Z} - \Aat \ppp{y}$ 
        \State $NB = b^\top \ppp{y}$
        \State \Return $NB$, $(\ppp{y}, \ppp{S}, \ppp{Z})$
    \end{algorithmic}
\end{algorithm}

To summarize, we have presented two different approaches to determine dual bounds 
for the primal DNN~\eqref{eq:primaldnn} from given $y$, $S$ and $Z$.

Note that the approaches are in the following sense complementary to each other: 
In the first approach from Jansson, Chaykin and Keil we fix $y$ and $S$ and obtain the bound from a 
newly computed $\ppp{Z}$, but we do not obtain a dual feasible solution.
In our second approach, the Nightjet procedure, we fix $\ppp{Z}$ to be the projection of $Z$ onto the positive semidefinite cone 
and then construct a feasible $\ppp{y}$ and $\ppp{S}$ from that.

Furthermore note that in the approach of Jansson, Chaykin and Keil the obtained bound is always less or 
equal to the dual objective function value of $y$, because a negative term is added to $b^\top y$, the dual objective function value using $y$. 
In contrast to that, it can happen in the Nightjet procedure that the bound is larger and hence 
better than $b^\top y$.
However, the Nightjet procedure comes with the drawback that it might be unable to produce a feasible solution. In this case one should continue running the ADMM to a higher precision and apply the procedure to the improved point.

\section{Numerical Experiments}\label{sec:numres}
In this section we present a comparison of the four ADMMs using the two procedures presented in Section~\ref{sec:SafeBounds} as post-processing phase. Towards that end 
we consider instances of one fundamental problem from combinatorial optimization, the stable set problem.

\subsection{The Stable Set Problem and an SDP Relaxation}
Given a graph $G$, let $V(G)$ be its set of vertices and $E(G)$ its set of edges. 
A subset of $V(G)$ is called stable, if no two vertices are adjacent.
The stability number $\alpha(G)$ is the largest possible cardinality of a stable set. It is NP-hard to compute the stability number~\cite{karpStableSetColoringNP} and it is even hard to approximate it~\cite{StableSetHardToApprox},
therefore upper bounds on the stability number are of interest.
One possible upper bound is the Lov\'asz theta function $\vartheta(G)$, see for example~\cite{matRel}.
The Lov\'asz theta function is defined as the optimal value of the 
SDP
\begin{equation}
\begin{aligned}
\vartheta(G) = 
 &\max  \, &&
 \left\langle J, X \right\rangle \\
 &\mbox{ s.t. } &&
 \begin{alignedat}[t]{2}
    \trace(X) &= 1\\
    X_{ij} &= 0 \quad \quad && \forall \{i,j\} \in E(G)
  \end{alignedat}\\
  &&&
  X\in \sdp,\nonumber 
\end{aligned}
\end{equation}
where $J$ is the $n$-by-$n$ matrix of all ones. 
Note that $\vartheta(G)$ -- as SDP of polynomial size -- can be computed to arbitrary precision in polynomial time.
Hence $\vartheta(G)$ is a polynomial computable upper bound on $\alpha(G)$.

Several attempts of improving $\vartheta(G)$ towards $\alpha(G)$ have been done.
One of the most recent ones is including the so called exact subgraph constraints into the SDP of computing $\vartheta(G)$, which make sure that for small subgraphs the solution is in the respective squared stable set  polytope~\cite{GaarRendl}.
This approach is a generalization of one of the first approaches to improve $\vartheta(G)$ in~\cite{SchrijverNonNeg}, which consisted of adding the constraint $X\geq 0$. Compared to $\vartheta(G)$ this leads to an even stronger bound on $\alpha(G)$ as the copositive cone is better approximated.
We denote by $\vartheta_+(G)$ the optimal objective function value of the DNN
\begin{equation}\label{theta+}
\begin{aligned}
\vartheta_+(G) = 
 &\max \, &&
 \left\langle J, X \right\rangle \\
 &\mbox{ s.t. } &&
 \begin{alignedat}[t]{2}
    \trace(X) &= 1\\
    X_{ij}   &= 0 \quad \quad &&\forall  \{i,j\} \in E(G)
  \end{alignedat}\\
  &&&
  X\in \sdp, \quad X\geq 0.  
\end{aligned}
\end{equation}
Note that in the DNN~\eqref{theta+} the matrix $\Aa \Aat$ is a diagonal matrix,
which leads to an inexpensive update of $y$ in the methods discussed.

\subsection{Dual Bounds for \texorpdfstring{$\vartheta_+(G)$}{Theta Plus} }
As already discussed in Section~\ref{sec:SafeBounds}, for a combinatorial optimization problem 
like the stable set problem, bounds on the objective function value are of huge importance.

The bound according to Jansson, Chaykin and Keil~\cite{JaChayKeil2007} can be used for computing
bounds on $\vartheta_+(G)$ very easily: 
We can set $\bar{x} = 1$, as for every feasible solution $X$ of~\eqref{theta+} 
we have $\trace(X) = 1$ and $X \in \sdp$ and hence $\lambda_{\max}(X) \leq 1$.

The computation of the dual bound with the Nightjet procedure simplifies 
drastically. In particular there is no need to solve the linear program~\eqref{eq:LP}, since the solution 
can be computed explicitly. 
To be more precise, the following holds.

\begin{lemma}
We consider the primal DNN~\eqref{theta+} to compute $\vartheta_+(G)$ and the dual of it.
Let $y_t$ be the dual variable for the 
constraint $\trace(X) = 1$ and $y_e$ be the dual variable for the constraint $X_{ij} = 0$ for every edge $e = \{i,j\} \in E(G)$.
Furthermore let $\ppp{Z} \in \sdp$ and let 
$
M = \max \left\{\ppp{Z}_{ij} \mid  \{i,j\}   \not\in E(G) \right\}.
$

If $M \ge 0 $, then it is not possible to construct a dual feasible solution with this $\ppp{Z}$.
If $-1 < M < 0$, then we can redefine $\ppp{Z}$ as
$
    \ppp{Z} = -\frac{1}{M}\ppp{Z},
$
and obtain a new $\ppp{Z}$ for which $M = - 1$.
If $M \leq -1$, then we obtain a dual feasible solution with
\begin{align*}
    \ppp{y}_t     &= \min \left\{- 1 - \ppp{Z}_{ii} \mid i \in \{1, 2, \dots, n\}\right\},\\
    \ppp{y}_e     &= 2(- 1 - \ppp{Z}_{ij}) \quad \quad \forall e = \{i,j\} \in E(G),\\
    \ppp{S} &= C -\ppp{Z} - \Aat \ppp{y}.
\end{align*}
\end{lemma}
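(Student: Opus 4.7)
The plan is to specialize Lemma~\ref{lem:NJProcedure} to the DNN~\eqref{theta+}, written in the standard form~\eqref{eq:primaldnn} with $C=-J$, and to solve the linear program~\eqref{eq:LP} entrywise in closed form. First I would note that the adjoint reads
$\Aat y = y_t I + \sum_{e=\{i,j\}\in E(G)} y_e\cdot\tfrac12(e_i e_j^\top + e_j e_i^\top)$,
so the slack $\ppp{S}=C-\ppp{Z}-\Aat y$ has entries $-1-\ppp{Z}_{ii}-y_t$ on the diagonal, $-1-\ppp{Z}_{ij}-\tfrac12 y_e$ on edges $\{i,j\}\in E(G)$, and $-1-\ppp{Z}_{ij}$ on off-diagonal non-edges. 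Consequently, the feasibility condition $\Aat y \le C-\ppp{Z}$ of~\eqref{eq:LP} decouples into three families of constraints: $y_t \le -1-\ppp{Z}_{ii}$ for every $i$, $y_e \le 2(-1-\ppp{Z}_{ij})$ for every edge, and the $y$-free condition $\ppp{Z}_{ij}\le -1$ for every off-diagonal non-edge.

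With that decomposition in hand, I would distinguish the three cases by the value of $M$. If $M\ge 0$, some non-edge violates $\ppp{Z}_{ij}\le -1$ no matter how $y$ is chosen, so~\eqref{eq:LP} is infeasible, and Lemma~\ref{lem:NJProcedure} immediately yields that no dual feasible point with this $\ppp{Z}$ exists. If $-1<M<0$, the rescaling $\ppp{Z}\mapsto -\tfrac1M\ppp{Z}$ uses the positive factor $-1/M$, hence preserves membership in $\sdp$ and transforms the maximum of the off-diagonal non-edge entries into $-\tfrac1M\cdot M=-1$, reducing the problem to the third case. If $M\le -1$, every non-edge condition is already satisfied; since $b^\top y = y_t$ (because $b_e=0$ for edges), maximizing~\eqref{eq:LP} amounts to pushing $y_t$ up to the tightest of the diagonal upper bounds, giving $\ppp{y}_t=\min_i\{-1-\ppp{Z}_{ii}\}$. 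The edge components do not enter the objective, so any feasible value is optimal for them, and the boundary choice $\ppp{y}_e = 2(-1-\ppp{Z}_{ij})$ is valid. Defining $\ppp{S}=C-\ppp{Z}-\Aat\ppp{y}$ then makes all entries of $\ppp{S}$ non-negative by construction, and Lemma~\ref{lem:NJProcedure} delivers the dual feasible triple $(\ppp{y},\ppp{S},\ppp{Z})$.

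The main obstacle is really just careful bookkeeping: fixing the sign convention that rewrites the maximization~\eqref{theta+} as $\min\langle -J,X\rangle$ to match the standard form used throughout Section~\ref{sec:nightjet}, and keeping track of the $\tfrac12$-factor appearing inside $\Aat$ on edge positions. Once these conventions are settled, the three-way case analysis and the entrywise verification of the dual feasibility conditions~\eqref{eq:dualfeas} are routine.
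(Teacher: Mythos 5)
Your proposal is correct and follows essentially the same route as the paper: specialize Lemma~\ref{lem:NJProcedure} to the DNN~\eqref{theta+} written in standard minimization form with $C=-J$, observe that the constraints of the linear program~\eqref{eq:LP} decouple entrywise into diagonal, edge, and non-edge families (the last being independent of $y$), and then run the three-way case analysis on $M$ with the closed-form optimizer $\ppp{y}_t=\min_i\{-1-\ppp{Z}_{ii}\}$ and the boundary choice for $\ppp{y}_e$. The bookkeeping you flag (the sign convention and the factor $\tfrac12$ in $\Aat$ on edge positions) is exactly what the paper's proof carries out explicitly.
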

\begin{proof}
We first consider the dual of~\eqref{theta+} in more detail.
To be consistent with 
our notation we replace the objective function $\max \,\left\langle J, X \right\rangle$ of~\eqref{theta+} 
with the equivalent objective function $-\min \,\left\langle -J, X \right\rangle$ in order to consider a 
primal minimization problem as in the primal DNN~\eqref{eq:primaldnn}. We introduce one dual variable $y_t$ for the 
constraint $\trace(X) = 1$ and one dual variable $y_e$ for the constraint $X_{ij} = 0$ for every edge $e = \{i,j\} \in E(G)$. Then the 
dual of~\eqref{theta+} is given as
\begin{equation}\label{eq:dtheta+}
\begin{aligned}
 -&\max \,&&
 y_t \\
 &\mbox{ s.t. } &&
 \begin{alignedat}[t]{2}
                y_t + Z_{ii} + S_{ii} &= -1 \quad \quad &&\forall  i  \in \{1, 2, \dots, n\} \\
    \tfrac{1}{2}y_e + Z_{ij} + S_{ij} &= -1 \quad       &&\forall  e = \{i,j\} \in E(G) \\
                      Z_{ij} + S_{ij} &= -1 \quad       &&\forall   \{i,j\} \not \in E(G)
  \end{alignedat}\\
  &&& 
  Z\in \sdp, \quad S\in \Sy, \quad S\geq 0, \quad y_t \in \R, \quad y_e \in \R \quad \forall e \in E(G). 
\end{aligned}
\end{equation}

Now we apply Lemma~\ref{lem:NJProcedure} for~\eqref{theta+}.
Thus we replace the dual variable $Z$ with the fixed $\ppp{Z} \in \sdp$ and 
the linear program~\eqref{eq:LP} becomes
\begin{equation}\label{eq:lpNJtheta+}
    \begin{aligned}
    -&\max \, && 
     y_t\\
     &\mbox{ s.t. }  && 
     \begin{alignedat}[t]{2}
        y_t             &\le -1 -  \ppp{Z}_{ii}\quad\quad && \forall  i  = \{1, 2, \dots, n\} \\
        \tfrac{1}{2}y_e  &\le -1 - \ppp{Z}_{ij} && \forall  e = \{i,j\} \in E(G) \\
          \ppp{Z}_{ij}  &\le -1  && \forall   \{i,j\} \not \in E(G)
      \end{alignedat}\\
      &&&
      y_t \in \R, \quad y_e \in \R \quad \forall e \in E(G).
    \end{aligned}
\end{equation}
Clearly this linear program is bounded and 
detecting infeasibility or constructing an optimal solution is straightforward. Indeed, let
$
    M = \max \left\{\ppp{Z}_{ij}\mid \{i,j\}   \not\in E(G) \right\},
$
then it is easy to see that~\eqref{eq:lpNJtheta+} is infeasible if $M > - 1$. However, if $-1 < M < 0$ holds, then we can redefine $\ppp{Z}$ as
$
    \ppp{Z} = -\frac{1}{M}\ppp{Z},
$
and obtain a new $\ppp{Z}$ for which $M=-1$. On the contrary, if $ M \ge 0$, we can not update $\ppp{Z}$ in a straightforward way.
If $M \le -1$, then~\eqref{eq:lpNJtheta+} is feasible and we can construct the optimal solution as
\begin{align*}
    y_t & = \min \left\{- 1 - \ppp{Z}_{ii}\mid i \in \{1, 2, \dots, n\} \right\},\\
    y_e & = 2(- 1 - \ppp{Z}_{ij}) \quad \quad \forall e = \{i,j\} \in E(G).
\end{align*}
Then we let $\ppp{S} = C -\ppp{Z} - \Aat \ppp{y}$ and due to Lemma~\ref{lem:NJProcedure} this yields a feasible dual solution $(\ppp{y}, \ppp{S}, \ppp{Z})$.
\end{proof}

Hence, for computing a dual bound for $\vartheta_+(G)$ it is not necessary to solve the linear program~\eqref{eq:LP}, 
but the solution of it can be written down explicitly. 
This explicit solution is used by the Nightjet procedure for $\vartheta_+(G)$ to obtain $\ppp{y}$. 
The computation of $\ppp{Z}$ and $\ppp{S}$ is the same as in the original Nighjet procedure. 
The pseudocode of the Nightjet procedure applied to the computation of $\vartheta_+(G)$ 
can be found in Algorithm~\ref{alg:NJHeuristicSpecific}.

\begin{algorithm}[ht]
    \caption{Scheme of the Nightjet Procedure for $\vartheta_+(G)$}
    \label{alg:NJHeuristicSpecific}
    \Input{$Z\in \Sy$}
    \begin{algorithmic}[1]
        \State $\ppp{Z} = (Z)_+$  
        \State $M = \max \left\{\ppp{Z}_{ij}\mid \{i,j\}   \not\in E(G)\right\}$
        \If{$M \ge 0$}
        \State \Return ``No dual feasible solution and no bound found''
        \ElsIf{$0 > M > -1$}
        \State $\ppp{Z} = \frac{1}{-M}\ppp{Z}$
        \EndIf
        \State $\ppp{y}_t  = \min \left\{- 1 - \ppp{Z}_{ii}\mid i \in \{1, 2, \dots, n\}\right\}$
        \State $\ppp{y}_e  = 2 (- 1 - \ppp{Z}_{ij}) \quad \forall e = \{i,j\} \in E(G)$
        \State $\ppp{S}= C -\ppp{Z} - \Aat \ppp{y}$ 
        \State $NB = b^\top \ppp{y}$
        \State \Return $NB$, $(\ppp{y}, \ppp{S}, \ppp{Z})$
    \end{algorithmic}
\end{algorithm}

\subsection{Comparison of the Evolution of the Dual Bounds}
In the following, we give a numerical comparison of the two procedures for the computation of bounds 
for $\vartheta_+(G)$ on one instance from the second DIMACS implementation challenge~\cite{Johnson:1996}, namely 
\texttt{johnson8\_2\_4}.
For this instance the stability number $\alpha(G)$ and $\vartheta_+(G)$ coincide 
and both are equal to $4$.

In Figure~\ref{fig:EvolBounds}, we show the evolution of the bounds along the iterations for \texttt{ADAL+}, \texttt{DADAL+}, \texttt{ConicADMM3c} and \texttt{DADMM3c}.
For each algorithm we report the dual objective function value (dualOfv), the bound computed according to
Jansson, Chaykin and Keil~\cite{JaChayKeil2007} ($EB$) and the bound computed by 
the Nightjet procedure ($NB$) at every iteration. 

Note that in some iterations the dual objective function value is not a bound on $\vartheta_+(G) = 4$ and hence also not on $\alpha(G)$.
This is due to the fact that the solution considered is not dual feasible. (The criteria are satisfied only to moderate precision.)

We observe that for \texttt{ADAL+}, \texttt{DADAL+} and \texttt{ConicADMM3c} the Nightjet bound is always less or equal than the error bound and in several 
iterations it is significantly better, in particular at the iterations in the beginning. 
Hence our Nightjet procedure is an effective tool to obtain dual bounds.
Note that every ADMM keeps $Z$ positive semidefinite along the iterations 
(see Table~\ref{tab:opt_cond_true}) and 
this may be in favor of the Nightjet procedure.

\begin{figure}[ht]
  \begin{center}
   \includegraphics[trim={1.5cm 0.5cm 1cm 0},clip,width=0.48\textwidth]{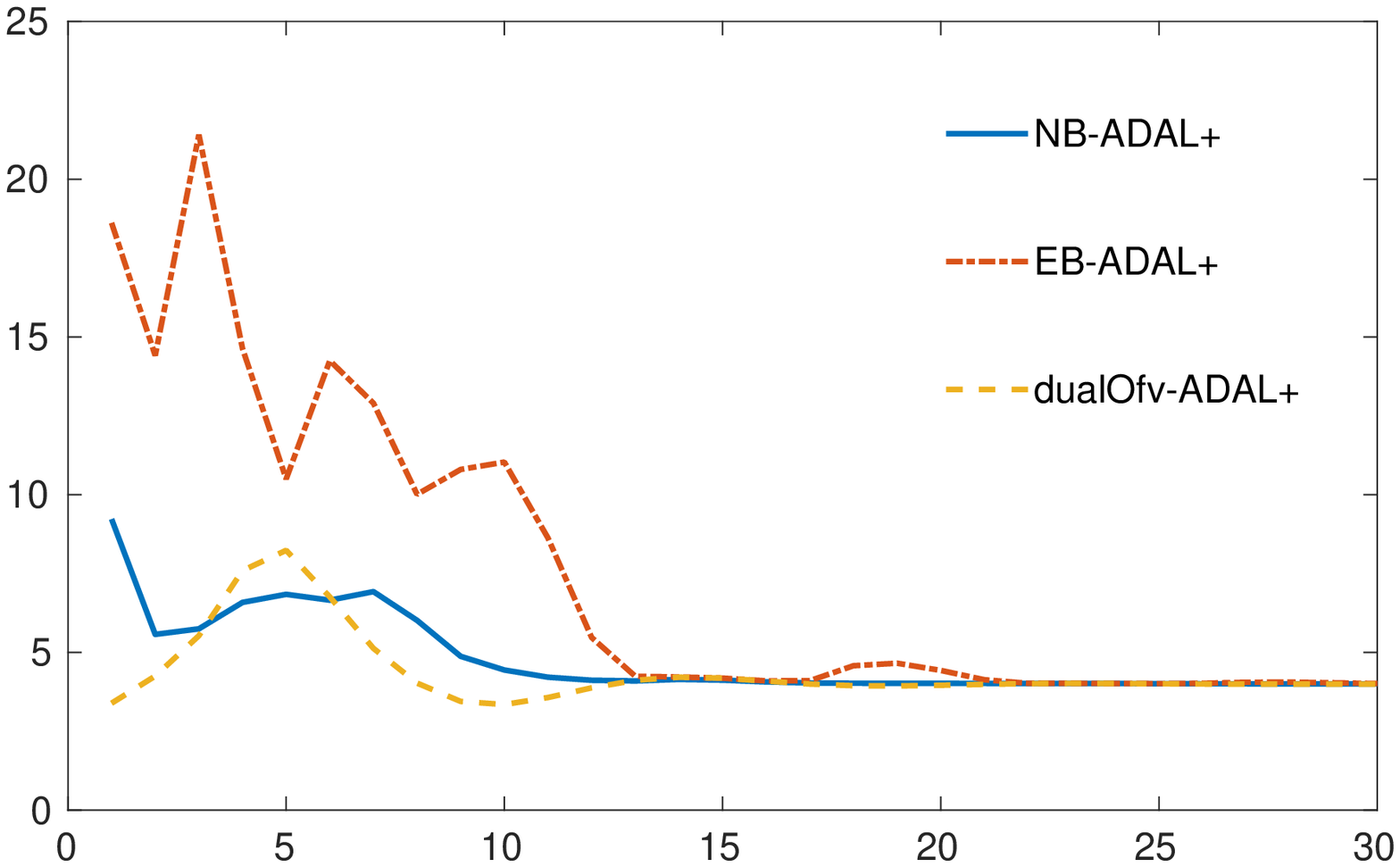}
   \includegraphics[trim={1.5cm 0.5cm 1cm 0},clip,width=0.48\textwidth]{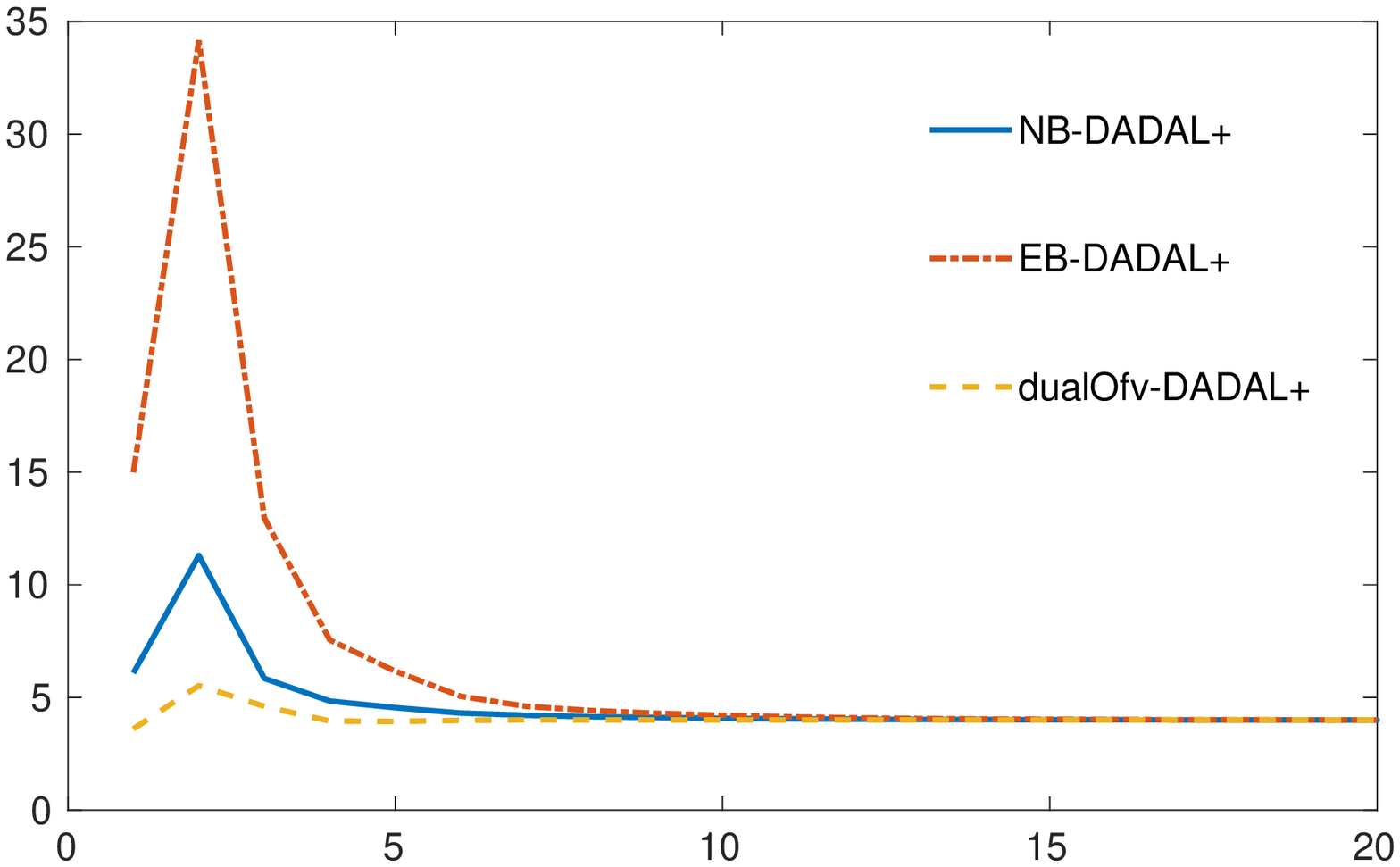}
   \\
   \includegraphics[trim={1.5cm 0.5cm 1cm 0},clip,width=0.48\textwidth]{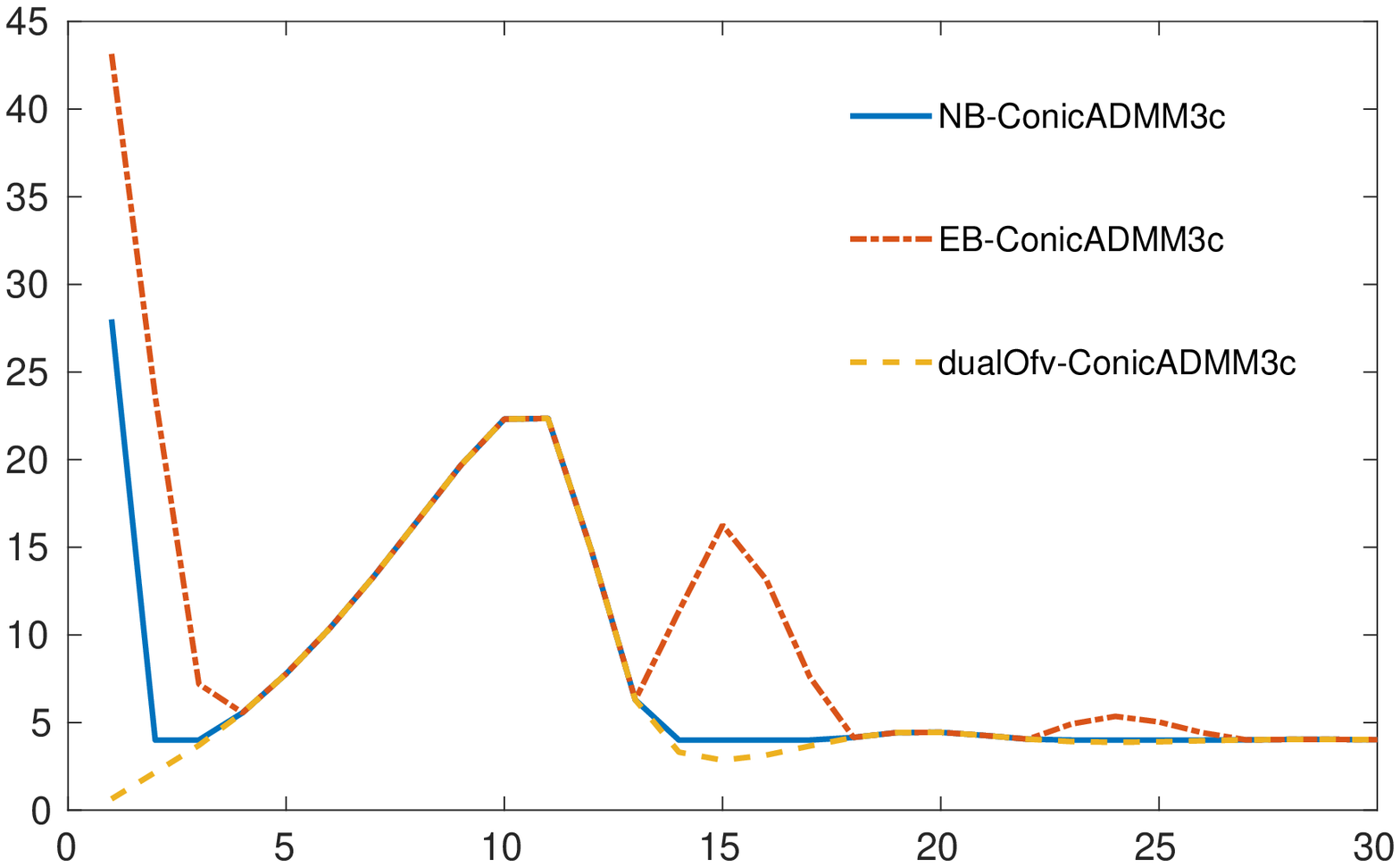}
   \includegraphics[trim={1.5cm 0.5cm 1cm 0},clip,width=0.48\textwidth]{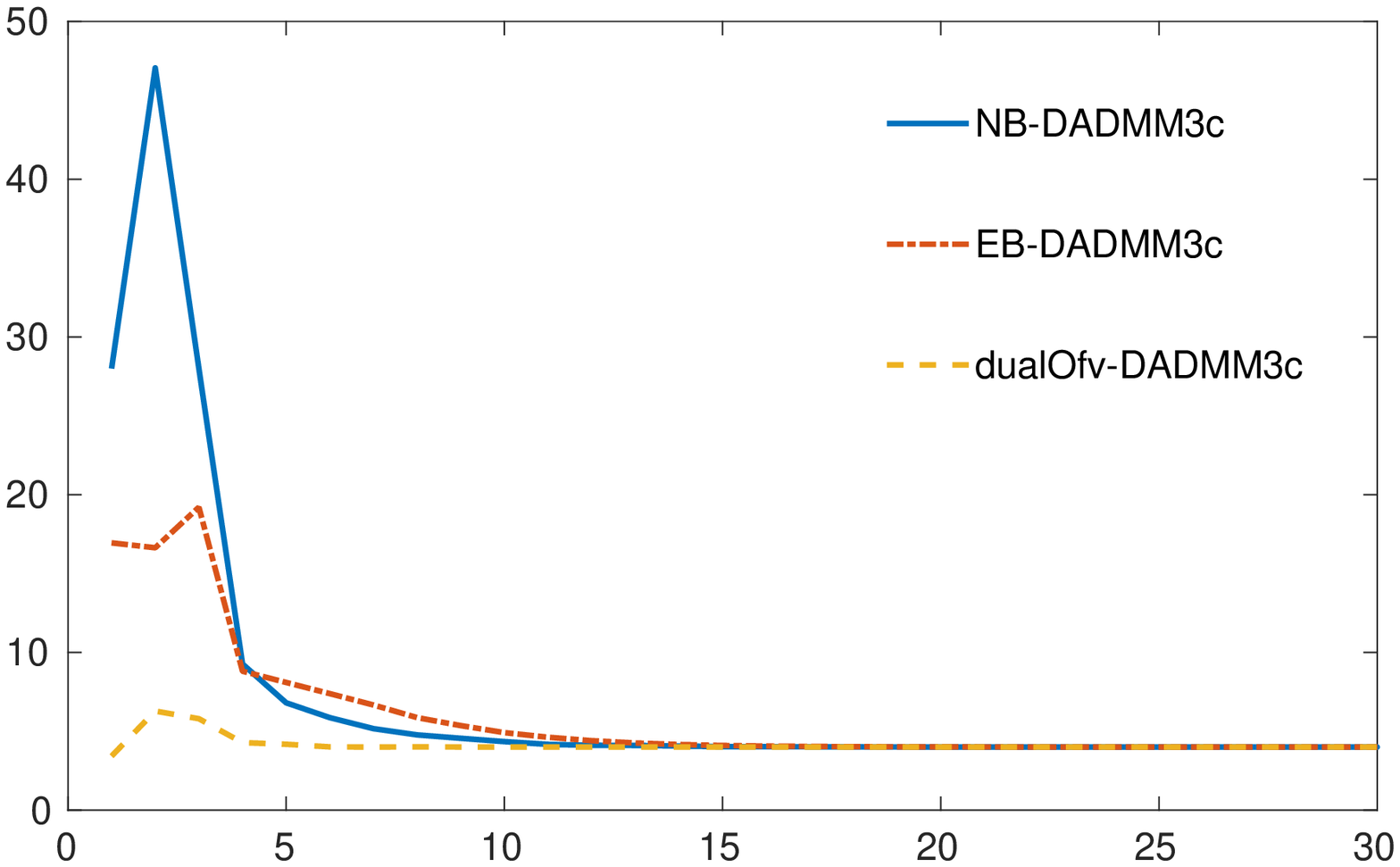}
  \end{center}
  \caption{Evolution of the computed bounds on the instance \texttt{johnson8\_2\_4}.}
  \label{fig:EvolBounds}
\end{figure}

\subsection{Computational Setup}
In our numerical experiments we compare the performance 
of \texttt{ADAL+}, \texttt{DADAL+}, \texttt{ConicADMM3c} and
\texttt{DADMM3c} on $66$ instances of the DNN~\eqref{theta+} to compute $\vartheta_+(G)$. The graphs are taken
from the second DIMACS implementation challenge~\cite{Johnson:1996}.
Note that in that challenge the task was to find a maximum clique of several graphs, 
so we consider the complement graphs of the graphs in~\cite{Johnson:1996}.
In Table~\ref{tab:instances}, 
for each instance on a graph $G$, 
we report its name (Problem) and its dimension 
(the number of vertices $n$ and the number of edges $m$ of $G$).
The value of the Lov\'asz theta function $\vartheta(G)$ for many of these instances can be found in \cite{ThetaValues_GiLeRoSm:13} and \cite{MaPoReWi:09}, in this article we exclusively focus on $\vartheta_+(G)$.

\begin{table}[ht]
\caption{Data of the DIMACS instances considered in~\cite{Johnson:1996}.}\label{tab:instances}
\begin{center}
\small{
\renewcommand{\arraystretch}{1.05}
\begin{tabular}{|l r r |l r r|l r r|}
    \hline
    Problem & $n$ & $m$ & Problem & $n$ & $m$ & Problem & $n$ & $m$ \\ 
\hline
johnson8\_2\_4    &  28 &       168   &    
hamming8\_4     & 256 &     11776   &  
p\_hat500\_2    & 500 &     61804  \\ 
MANN\_a9     &  45 &       72    &	   
p\_hat300\_1    & 300 &     33917   &  
p\_hat500\_3    & 500 &     30950  \\ 
hamming6\_2     &  64 &      192    &	   
p\_hat300\_2    & 300 &     22922   &  
p\_hat700\_1    & 700 &    183651  \\ 
hamming6\_4     &  64 &     1312    &	   
p\_hat300\_3    & 300 &     11460   &  
p\_hat700\_2    & 700 &    122922  \\ 
johnson8\_4\_4    &  70 &       560   &	     
MANN\_a27     & 378 &       702   &  
p\_hat700\_3    & 700 &     61640  \\ 
johnson16\_2\_4   & 120 &     1680    &	   
brock400\_1     & 400 &     20077   &     
keller5      & 776 &     74710  \\ 
keller4      & 171 &    5100     &	   
brock400\_2     & 400 &     20014   &  
brock800\_1     & 800 &   112095   \\ 
brock200\_1     & 200 &     5066    &	   
brock400\_3     & 400 &     20119   &  
brock800\_2     & 800 &   111434   \\ 
brock200\_2     & 200 &    10024    &	   
brock400\_4     & 400 &     20035   &  
brock800\_3     & 800 &   112267   \\ 
brock200\_3     & 200 &     7852    &	 
san400\_0\_5\_1   & 400 &     39900   &  
brock800\_4     & 800 &   111957   \\ 
brock200\_4     & 200 &     6811    &	 
san400\_0\_7\_1   & 400 &     23940   & 
p\_hat1000\_1    & 1000&    377247  \\ 
c\_fat200\_1    & 200 &     18366   &	 
san400\_0\_7\_2   & 400 &     23940   & 
p\_hat1000\_2    & 1000&    254701  \\ 
c\_fat200\_2    & 200 &     16665   &	 
san400\_0\_7\_3   & 400 &     23940   & 
p\_hat1000\_3    & 1000&    127754  \\ 
c\_fat200\_5    & 200 &     11427   &	 
san400\_0\_9\_1   & 400 &      7980   &     
san1000      & 1000&   249000   \\ 
san200\_0\_7\_1   & 200 &      5970   &	  
sanr400\_0\_5    & 400 &     39816   & 
hamming10\_2     & 1024&      5120  \\ 
san200\_0\_7\_2   & 200 &     5970    &	  
sanr400\_0\_7    & 400 &     23931   & 
hamming10\_4     & 1024&     89600  \\ 
san200\_0\_9\_1   & 200 &     1990    &	 
johnson32\_2\_4   & 496 &     14880   &    
MANN\_a45     & 1035&      1980  \\ 
san200\_0\_9\_2   & 200 &     1990    &	  
c\_fat500\_10    & 500 &     78123   & 
p\_hat1500\_1    & 1500&    839327  \\ 
san200\_0\_9\_3   & 200 &     1990    &	   
c\_fat500\_1    & 500 &    120291   & 
p\_hat1500\_2    & 1500&    555290  \\ 
san200\_0\_7    & 200 &      6032   &	   
c\_fat500\_2    & 500 &    115611   & 
p\_hat1500\_3    & 1500&    277006  \\ 
san200\_0\_9    & 200 &      2037   &	   
c\_fat500\_5    & 500 &    101559   &    
MANN\_a81     & 3321&      6480  \\ 
hamming8\_2     & 256 &      1024   &	   
p\_hat500\_1    & 500 &     93181   &     
keller6      & 3361&   1026582  \\
\hline
\end{tabular}
}
\end{center}
\end{table}

We implemented the four algorithms detailed in Sections~\ref{sec:admm} 
and~\ref{sec:Zfactorization} in MATLAB~R2019a.
In all computations, we set the accuracy~level~$\varepsilon$ to $10^{-5}$ and we set a time limit of $3600$ seconds CPU time.
In both \texttt{DADAL+} and \texttt{DADMM3c} we perform two iterations of Algorithm~\ref{alg:OneItUncMax} 
in order to update $(y,V)$. 

It is known that the performance of ADMMs strongly depends on the update of the penalty parameter $\sigma$.
In all implementations, we use the strategy described by Lorenz
and Tran-Dinh~\cite{Lorenz2019}, so in iteration $k$ we set
$$\sigma^k = \frac{\lVert X^k \rVert}{\lVert Z^k \rVert}.$$

The experiments were carried out on an Intel Core i7 processor running at 3.1~GHz under Linux.

\subsection{Comparison between \texttt{ADAL+} and \texttt{DADAL+}}
In Table~\ref{tab:ADAL+} we report the results obtained with \texttt{ADAL+} and \texttt{DADAL+}
on the $66$ instances of computing $\vartheta_+(G)$ detailed in Table~\ref{tab:instances}.
We include the following data for the
comparison: For each instance, we report its name (Problem) and its stability number ($\alpha$) and
for each of the two algorithms, we report the dual objective function value obtained (d ofv),  the bound obtained by computing the error 
bound described in Section~\ref{sec:errorBound}
($EB$), 
the bound obtained by applying the Nightjet procedure described in Section~\ref{sec:nightjet} ($NB$), the number of iterations (it) and the CPU time needed to 
satisfy the stopping criterion (time).

As a further comparison, we report in Figure~\ref{fig:PP-Adal} 
the performance profiles of \texttt{ADAL+} and \texttt{DADAL+}
with respect to the number of iterations and the CPU time. 
These performance profiles are obtained in the following way. 
Given our set of solvers~$\mathcal{S}$ and a set of problems $\mathcal{P}$, 
we compare the performance of  a solver $s \in \mathcal{S}$ on problem $p \in \mathcal{P}$
against the best performance obtained by any solver in~$\mathcal{S}$ 
on the same problem. To this end we define the performance ratio
$
r_{p,s} = t_{p,s}/\min\{t_{p,s^\prime} \mid s^\prime \in\mathcal{S}\},
$
where $t_{p,s}$ is the measure we want to compare, and we consider a cumulative distribution
function~$\rho_s(\tau) = |\{p\in \mathcal{P} \mid r_{p,s}\leq \tau \}| /|\mathcal{P}|$.
The performance profile for $s \in S$ is the plot of the function $\rho_s$.

Note that both \texttt{ADAL+} and \texttt{DADAL+} stopped on $7$ instances because of the time limit.
In the performance profiles, we exclude those instances where at least one of the solvers 
exceeds the time limit.

It is clear from the results on Table~\ref{tab:ADAL+} and from the performance profiles that \texttt{DADAL+} 
performs much less iterations than \texttt{ADAL+}. 
However, this does not always correspond to an improvement in terms of computational time as the double update of $y$ is an expensive operation.

With respect to the CPU time, Figure~\ref{fig:PP-Adal} shows that the performance of the two algorithms is similar, even if
\texttt{DADAL+} slightly outperforms \texttt{ADAL+} as its curve is always above the other one.

If we consider the dual objective function value in Table~\ref{tab:ADAL+} we see that in fact the dual objective function value obtained by \texttt{ADAL+} and \texttt{DADAL+} is often not a bound, for example on the instances \texttt{hamming6\_4}, \texttt{c\_fat200\_1}, \texttt{san200\_0\_7\_1}, \texttt{san400\_0\_9\_1}, \texttt{c\_fat500\_1} and \texttt{c\_fat500\_5}. This shows that a procedure for obtaining a bound from the approximate solution is indeed of major importance. 

Regarding the quality of the bounds, the Nightjet procedure is able to obtain better bounds with respect 
to the error bounds, both when applied as post-processing phase for \texttt{ADAL+}
and for \texttt{DADAL+}, for the vast majority of the instances. 
The improvement is particularly impressive when looking at those instances where the time limit is exceeded.
We want to further highlight that the bound obtained from the Nightjet procedure 
comes from a newly computed feasible dual solution. This means that applying the Nightjet procedure as post-processing does not only 
guarantee a bound generally better than the one obtained by the error bounds,
but it also provides a dual feasible solution.

\begin{center}
\ifFourOR 
\small{
\else 
\scriptsize{
\fi 
\setlength{\tabcolsep}{4.5pt}
\renewcommand{\arraystretch}{1.05}
\begin{longtable}{|l|rrrrr|rrrrr|r|}
\caption{Comparison between \texttt{ADAL+} and \texttt{DADAL+} on DIMACS instances~\cite{Johnson:1996}.}\label{tab:ADAL+}\\
\hline
\footnotesize
& \multicolumn{5}{c|}{ \texttt{ADAL+}} & \multicolumn{5}{c|}{ \texttt{DADAL+}} &\\
 Problem & d ofv & $EB$ & $NB$ & it & time &  d ofv  & $EB$ & $NB$ & it & time & $\alpha$ \\ 
 \endfirsthead
 \multicolumn{12}{c}{{\tablename\ \thetable{} -- continued from previous page}} \\\hline
& \multicolumn{5}{c|}{ \texttt{ADAL+}} & \multicolumn{5}{c|}{ \texttt{DADAL+}} & \\
 Problem & d ofv & $EB$ & $NB$ & it & time &  d ofv & $EB$ & $NB$ & it & time & $\alpha$ \\ 
 \hline
\hline
 \endhead
 \hline \multicolumn{12}{r}{{continued on next page}} \\
 \endfoot
 \hline
 \endlastfoot
\hline
\hline
johnson8\_2\_4    &  3.99999 &  4.00037 &  4.00012 &    44 &     1.1   &   4.00000 &  4.00025 &  4.00009 &    25  &    0.5 &   4   \\	   
     MANN\_a9     &  17.4750 &  17.4756 &  17.4755 &   765 &     1.1   &   17.4750 &  17.4756 &  17.4755 &   510  &    0.9 &  16   \\	   
  hamming6\_2     &  32.0005 &  32.0005 &  32.0004 &   669 &     1.9   &   31.9996 &  32.0004 &  32.0000 &   250  &    1.0 &  32   \\	   
  hamming6\_4     &  3.99994 &  4.00197 &  4.00016 &    56 &     0.1   &   3.99998 &  4.00110 &  4.00010 &    26  &    0.1 &   4   \\	   
johnson8\_4\_4    &  13.9998 &  14.0016 &  14.0002 &   135 &     0.3   &   14.0001 &  14.0001 &  14.0004 &    47  &    0.3 &  14   \\	   
johnson16\_2\_4   &  8.00000 &  8.00166 &  8.00034 &    89 &     0.6   &   8.00000 &  8.00411 &  8.00037 &    35  &    0.3 &   8   \\	   
     keller4      &  13.4659 &  13.4701 &  13.4667 &   764 &     8.7   &   13.4659 &  13.4716 &  13.4669 &   260  &    5.2 &  11   \\	   
  brock200\_1     &  27.1968 &  27.2003 &  27.1978 &   312 &     4.9   &   27.1966 &  27.2002 &  27.2007 &   222  &    7.2 &  21   \\	   
  brock200\_2     &  14.1310 &  14.1367 &  14.1325 &   158 &     2.6   &   14.1310 &  14.1359 &  14.1335 &   150  &    5.0 &  12   \\	   
  brock200\_3     &  18.6718 &  18.6764 &  18.6727 &   219 &     3.5   &   18.6718 &  18.6764 &  18.6745 &   146  &    3.8 &  15   \\	   
  brock200\_4     &  21.1211 &  21.1253 &  21.1220 &   264 &     4.2   &   21.1210 &  21.1254 &  21.1246 &   185  &    4.8 &  17   \\	   
  c\_fat200\_1    &  11.9999 &  12.0008 &  12.0006 &   339 &     5.2   &   11.9999 &  12.0009 &  12.0002 &   133  &    3.7 &  12   \\	   
  c\_fat200\_2    &  23.9981 &  24.0000 &  24.0000 &  2686 &    44.3   &   24.0015 &  24.0015 &  24.0014 &   782  &   21.9 &  24   \\	   
  c\_fat200\_5    &  60.3443 &  60.3483 &  60.3456 &  1218 &    19.1   &   60.3452 &  60.3474 &  60.3465 &   681  &   18.1 &  58   \\	   
san200\_0\_7\_1   &  29.9980 &  30.0000 &  30.0000 &  3441 &    54.6   &   29.9986 &  30.0000 &  30.0000 &   809  &   21.6 &  30   \\	   
san200\_0\_7\_2   &  18.0019 &  18.0019 &  18.0019 &  8782 &   144.6   &   18.0014 &  18.0014 &  18.0015 &  7614  &  202.6 &  18   \\	   
san200\_0\_9\_1   &  69.9980 &  70.0000 &  70.0000 &  3668 &    56.3   &   70.0009 &  70.0009 &  70.0008 &   670  &   16.1 &  70   \\	   
san200\_0\_9\_2   &  60.0020 &  60.0020 &  60.0019 &  3551 &    53.9   &   59.9991 &  60.0000 &  60.0000 &   688  &   17.0 &  60   \\	   
san200\_0\_9\_3   &  44.0016 &  44.0016 &  44.0016 & 11792 &   190.9   &   44.0010 &  44.0010 &  44.0014 & 10386  &  263.0 &  44   \\	   
  san200\_0\_7    &  23.6333 &  23.6372 &  23.6344 &   313 &     4.8   &   23.6332 &  23.6370 &  23.6364 &   218  &    5.9 &  18   \\	   
  san200\_0\_9    &  48.9046 &  48.9077 &  48.9063 &   403 &     6.1   &   48.9044 &  48.9070 &  48.9083 &   400  &   10.0 &  42   \\	   
  hamming8\_2     &  128.002 &  128.002 &  128.002 &  2760 &    83.8   &   128.001 &  128.001 &  128.001 &   694  &   29.6 & 128   \\	   
  hamming8\_4     &  16.0002 &  16.0002 &  16.0012 &   121 &     3.6   &   16.0001 &  16.0001 &  16.0011 &    52  &    3.0 &  16   \\	   
  p\_hat300\_1    &  10.0203 &  10.0348 &  10.0232 &   380 &    15.6   &   10.0202 &  10.0372 &  10.0208 &   457  &   30.3 &   8   \\	   
  p\_hat300\_2    &  26.7143 &  26.7154 &  26.7153 &  2315 &    94.2   &   26.7138 &  26.7274 &  26.7157 &  3576  &  230.0 &  25   \\	   
  p\_hat300\_3    &  40.7008 &  40.7096 &  40.7030 &   604 &    24.6   &   40.7003 &  40.7075 &  40.7061 &   817  &   51.1 &  36   \\	   
    MANN\_a27     &  132.762 &  132.765 &  132.763 &  2037 &   136.4   &   132.762 &  132.766 &  132.765 &   768  &   77.9 & 126   \\	   
  brock400\_1     &  39.3307 &  39.3438 &  39.3377 &   215 &    16.7   &   39.3308 &  39.3433 &  39.3391 &   148  &   18.5 &  27   \\	   
  brock400\_2     &  39.1963 &  39.2083 &  39.2024 &   216 &    16.7   &   39.1964 &  39.2080 &  39.2037 &   152  &   19.0 &  29   \\	   
  brock400\_3     &  39.1602 &  39.1742 &  39.1673 &   211 &    16.5   &   39.1603 &  39.1724 &  39.1679 &   149  &   19.2 &  31   \\	   
  brock400\_4     &  39.2313 &  39.2455 &  39.2361 &   208 &    15.6   &   39.2313 &  39.2440 &  39.2363 &   146  &   18.3 &  33   \\	   
san400\_0\_5\_1   &  13.0038 &  13.0038 &  13.0036 &  8135 &   629.4   &   13.0034 &  13.0034 &  13.0032 &  4705  &  528.4 &  13   \\	   
san400\_0\_7\_1   &  39.9962 &  40.0000 &  40.0000 &  6654 &   516.5   &   40.0038 &  40.0038 &  40.0037 &  1723  &  213.6 &  40   \\	   
san400\_0\_7\_2   &  30.0037 &  30.0037 &  30.0036 &  7999 &   623.3   &   30.0032 &  30.0032 &  30.0031 &  3863  &  468.0 &  30   \\	   
san400\_0\_7\_3   &  22.0000 &  22.0084 &  22.0044 &   601 &    47.9   &   22.0002 &  22.0143 &  22.0010 &   265  &   31.1 &  22   \\	   
san400\_0\_9\_1   &  99.9960 &  100.000 &  100.000 &  7212 &   547.5   &   99.9978 &  100.000 &  100.000 &  1499  &  167.8 & 100   \\	   
 sanr400\_0\_5    &  20.1782 &  20.1924 &  20.1794 &   164 &    12.6   &   20.1782 &  20.1924 &  20.1839 &   115  &   15.2 &  13   \\	   
 sanr400\_0\_7    &  33.9666 &  33.9794 &  33.9727 &   186 &    14.5   &   33.9666 &  33.9790 &  33.9724 &   141  &   17.3 &  21   \\	   
johnson32\_2\_4   & 15.9999  &  16.0047 &  16.0000 &   272 &    36.8   &   16.0000 &  16.0264 &  16.0005 &    69  &   12.4 &  16   \\	   
 c\_fat500\_10    &  126.003 &  126.003 &  126.003 &  3752 &   509.8   &   126.000 &  126.001 &  126.001 &  2772  &  549.4 & 126   \\	   
  c\_fat500\_1    &  13.9992 &  14.0113 &  14.0015 &   488 &    64.7   &   13.9987 &  14.0147 &  14.0004 &   251  &   58.9 &  14   \\	   
  c\_fat500\_2    &  25.9988 &  26.0136 &  26.0007 &   554 &    74.7   &   25.9990 &  26.0116 &  26.0003 &   302  &   69.8 &  26   \\	   
  c\_fat500\_5    &  63.9970 &  64.0040 &  64.0008 &  2740 &   374.1   &   63.9969 &  64.0044 &  64.0007 &  1104  &  252.4 &  64   \\	   
  p\_hat500\_1    &  13.0080 &  13.0401 &  13.0102 &   342 &    46.4   &   13.0079 &  13.0454 &  13.0088 &   380  &   81.0 &   9   \\	   
  p\_hat500\_2    &  38.5606 &  38.5638 &  38.5653 &  2159 &   290.5   &   38.5594 &  38.5871 &  38.5619 &  4404  &  907.9 &  36   \\	   
  p\_hat500\_3    &  57.8122 &  57.8287 &  57.8220 &   850 &   117.5   &   57.8111 &  57.8251 &  57.8155 &  1021  &  208.3 &  $\geq 50$     \\ 
  p\_hat700\_1    &  15.0452 &  15.0996 &  15.0500 &   361 &   117.4   &   15.0451 &  15.1077 &  15.0460 &   422  &  204.8 &  11   \\	   
  p\_hat700\_2    &  48.4420 &  48.4466 &  48.4463 &  2168 &   696.3   &   48.4401 &  48.4827 &  48.4436 &  4901  & 2388.1 &  44   \\	   
  p\_hat700\_3    &  71.7569 &  71.7761 &  71.7701 &  1234 &   391.6   &   71.7551 &  71.7853 &  71.7736 &  1543  &  727.0 &  62   \\	   
     keller5      &  30.9956 &  31.0461 &  30.9987 &  1684 &   767.0   &   30.9956 &  31.0469 &  30.9984 &  1970  & 1136.6 &  27   \\	   
  brock800\_1     &  41.8673 &  41.9080 &  41.8712 &   232 &   118.6   &   41.8673 &  41.9107 &  41.8701 &   107  &   81.8 &  23   \\	   
  brock800\_2     &  42.1043 &  42.1446 &  42.1071 &   231 &   121.6   &   42.1043 &  42.1477 &  42.1067 &   107  &   80.1 &  24   \\	   
  brock800\_3     &  41.8825 &  41.9235 &  41.8860 &   234 &   125.4   &   41.8825 &  41.9251 &  41.8859 &   109  &   84.9 &  25   \\	   
  brock800\_4     &  42.0006 &  42.0426 &  42.0037 &   232 &   122.9   &   42.0006 &  42.0429 &  42.0034 &   108  &   84.1 &  26   \\	   
 p\_hat1000\_1    &  17.5225 &  17.5888 &  17.5303 &   475 &   660.2   &   17.5222 &  17.6307 &  17.5233 &   492  &  872.0 & $\geq 10$      \\ 
 p\_hat1000\_2    &  54.8464 &  54.8586 &  54.8522 &  1919 &  2583.9   &   54.8440 &  55.0811 &  54.8723 &  2013  & $\geq 3600$ & $\geq 46$      \\ 
 p\_hat1000\_3    &  83.5297 &  83.5515 &  83.5469 &  1457 &  1954.8   &   83.5285 &  83.5677 &  83.5367 &  1403  & 2460.7 & $\geq 68$      \\  
     san1000      &  15.0003 &  15.1126 &  15.0016 &  1757 &  2420.4   &   14.9999 &  15.0527 &  15.0282 &   893  & 1538.7 &  15   \\	    
 hamming10\_2     &  5819.50 &  5819.50 &  5809.20 &  2141 &$\geq 3600$&   512.220 &  512.220 &  512.220 &  1919  & $\geq 3600$ & 512   \\	    
 hamming10\_4     &  42.6673 &  42.6755 &  42.6678 &   576 &   947.2   &   42.6670 &  42.7206 &  42.6674 &   107  &  198.9 &  40   \\	    
    MANN\_a45     &  14869.8 &  14869.8 &  14869.8 &  2289 &$\geq 3600$&   356.048 &  356.070 &  356.055 &   873  & 1670.5 & 345   \\	    
 p\_hat1500\_1    &  21.8947 &  22.0632 &  21.9062 &   563 &$\geq 3600$&   21.8925 &  22.6598 &  21.8976 &   486  & $\geq 3600$ &  12   \\	    
 p\_hat1500\_2    &  75.9170 &  84.6770 &  89.5774 &   562 &$\geq 3600$&   76.4810 &  76.6852 &  76.6293 &   479  & $\geq 3600$ &  65   \\	  
 p\_hat1500\_3    &  303.073 &  374.111 &  342.539 &   562 &$\geq 3600$&   113.701 &  113.981 &  113.779 &   491  & $\geq 3600$ &  94   \\	  
    MANN\_a81     &  859.136 &  2131.14 &  1467.70 &    50 &$\geq 3600$&   4127.67 &  4127.80 &  4127.67 &    45  & $\geq 3600$ & 1100  \\	  
     keller6      &  3655.24 &  3661.45 &  3637.22 &    47 &$\geq 3600$&   97.8409 &  322.341 &  103.789 &    43  & $\geq 3600$ & $\geq 59$       \\

 \hline

\end{longtable}
}
\end{center}

\begin{figure}[ht]
  \begin{center}
   \includegraphics[trim={1.5cm 0.5cm 1cm 0},clip,width=0.48\textwidth]{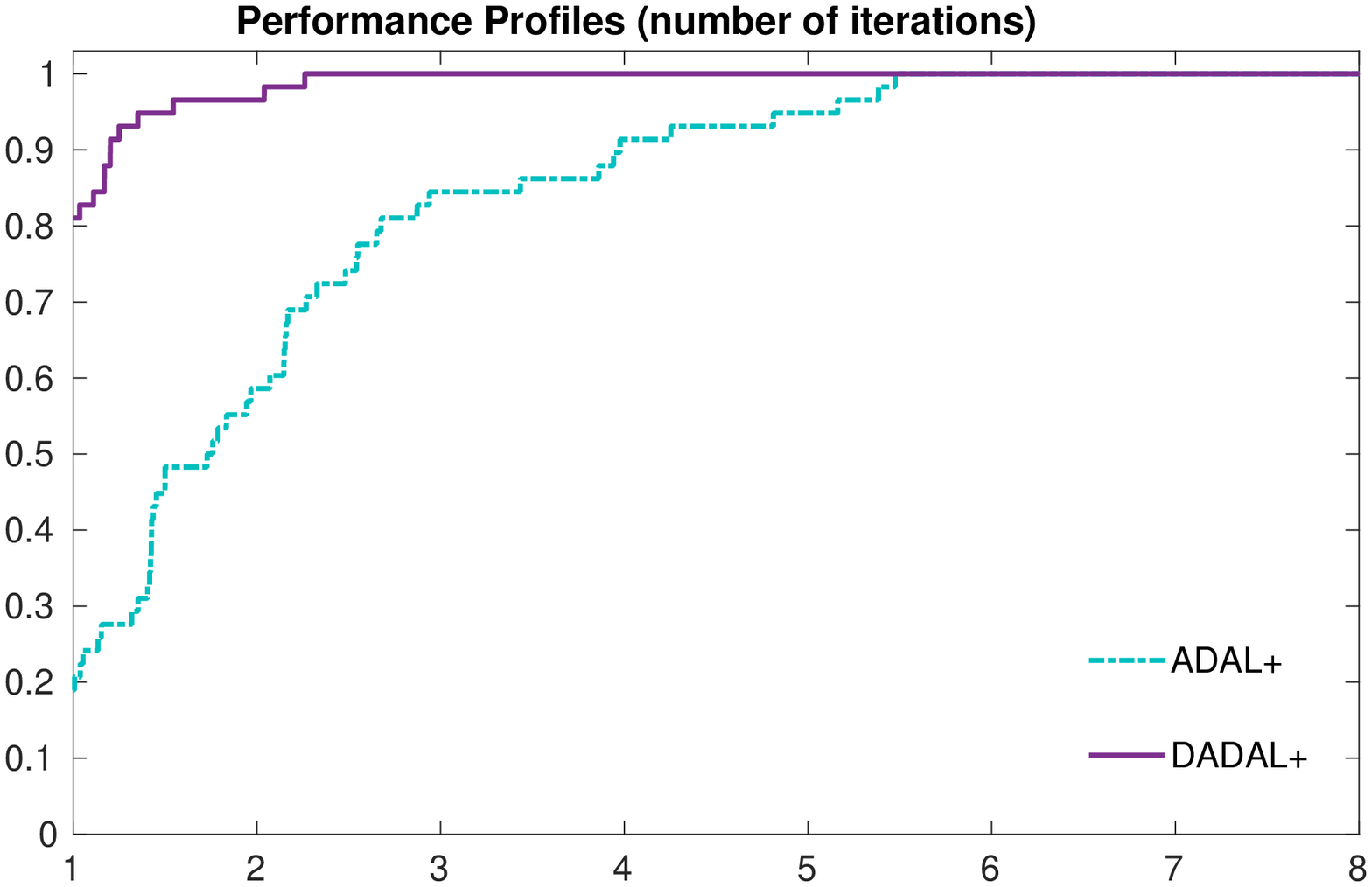} \includegraphics[trim={1.5cm 0.5cm 1cm 0},clip,width=0.48\textwidth]{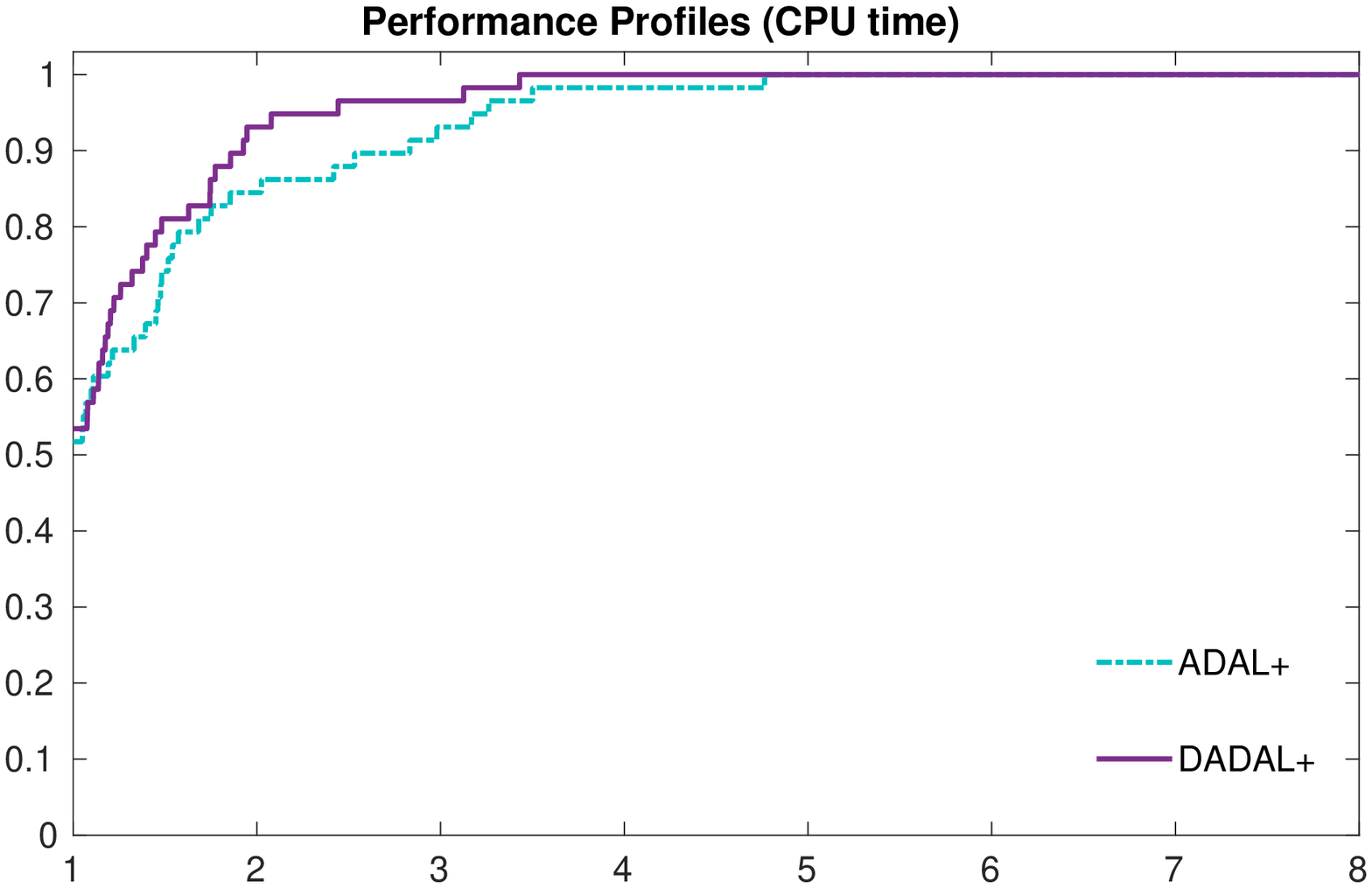}
  \end{center}
  \caption{Comparison between \texttt{ADAL+} and \texttt{DADAL+} on DIMACS instances~\cite{Johnson:1996}.}
  \label{fig:PP-Adal}
\end{figure}
 
\subsection{Comparison between \texttt{ConicADMM3c} and \texttt{DADMM3c}}
In Table~\ref{tab:ConicDADMM3c} we report the results obtained with 
\texttt{ConicADMM3c} and \texttt{DADMM3c}
on the $66$ instances of computing $\vartheta_+(G)$ detailed in Table~\ref{tab:instances}.

As before, we report the name of the instances, the stability number and, for each algorithm, the dual objective function value obtained,  the bounds obtained by computing the error bound and by applying the Nightjet procedure,
the number of iterations and the CPU time needed to satisfy the stopping criterion.

\texttt{ConicADMM3c} was not able to stop within the time limit on $11$ instances, while
\texttt{DADMM3c} was not able to stop within the time limit on $15$ instances.

In general, \texttt{DADMM3c} needs to perform  much less iterations and 
it is slightly better than \texttt{ConicADMM3c} in terms of CPU time as it is confirmed by the performance profiles shown in Figure~\ref{fig:PP-ADMM}.
As before, we did not include the instances that exceeded time limit in the performance profiles.

Again, the Nightjet procedure is able to obtain better bounds with respect 
to the error bounds, both when applied as post-processing phase for \texttt{ConicADMM3c}
and for \texttt{DADMM3c}, for the majority of the instances.
However, there exist cases ($6$ instances) where the Nightjet procedure fails.

We finally mention that on several instances where the time limit was exceeded, the bounds obtained by \texttt{DADMM3c}
are much better than those obtained by \texttt{ConicADMM3c},  
see for example the instances \texttt{p\_hat1500\_1}, \texttt{p\_hat1500\_2} and \texttt{p\_hat1500\_3}.

\begin{center}
\ifFourOR 
\small{
\else 
\scriptsize{
\fi 
\setlength{\tabcolsep}{4.5pt}
\renewcommand{\arraystretch}{1.05}
\begin{longtable}{|l|rrrrr|rrrrr|r|}
\caption{Comparison between \texttt{ConicADMM3c} and \texttt{DADMM3c} on DIMACS instances~\cite{Johnson:1996}.}\label{tab:ConicDADMM3c}\\
\hline
\footnotesize
& \multicolumn{5}{c|}{ \texttt{ConicADMM3c}} & \multicolumn{5}{c|}{ \texttt{DADMM3c}} &\\
 Problem & d ofv & $EB$ & $NB$ & it & time &  d ofv  & $EB$ & $NB$ & it & time & $\alpha$ \\ 
 \endfirsthead
 \multicolumn{12}{c}{{\tablename\ \thetable{} -- continued from previous page}} \\\hline
& \multicolumn{5}{c|}{ \texttt{ConicADMM3c}} & \multicolumn{5}{c|}{ \texttt{DADMM3c}} & \\
 Problem & d ofv & $EB$ & $NB$ & it & time &  d ofv & $EB$ & $NB$ & it & time & $\alpha$ \\ 
 \hline
\hline
 \endhead
 \hline \multicolumn{12}{r}{{continued on next page}} \\
 \endfoot
 \hline
 \endlastfoot
\hline
\hline
johnson8\_2\_4  	&	3.99997	&	4.00033	&	4.00000	&	53	&	0.1	&	  4.00000	&	4.00017	&	4.00010	& 	28	&	0.1&	4	\\
     MANN\_a9   	&	17.4752	&	17.4752	&	17.4752	&	277	&	0.5	&	  17.4750	&	17.4756	&	17.4755	&	988	&	4.4&	16	\\
  hamming6\_2   	&	31.9996	&	32.0004	&	32.0000	&	776	&	2.6	&	  32.0001	&	32.0001	&	32.0001	&	309	&	2.3&	32	\\
  hamming6\_4   	&	4.00002	&	4.00025	&	4.00002	&	69	&	0.2	&	  4.00000	&	4.00145	&	4.00020	&	50	&	0.3&	4	\\
johnson8\_4\_4  	&	13.9999	&	14.0013	&	14.0002	&	151	&	0.5	&	  14.0000	&	14.0000	&	14.0007	&	286	&	2.4&	14	\\
johnson16\_2\_4 	&	7.99995	&	8.00133	&	8.00000	&	106	&	1.0	&	  8.00000	&	8.00404	&	8.00044	&	66	&	0.9&	8	\\
     keller4    	&	13.4660	&	13.4711	&	13.4670	&	338	&	7.1	&	  13.4659	&	13.4703	&	13.4660	&	552	&	16.9&	11	\\
  brock200\_1   	&	27.1967	&	27.2002	&	27.2004	&	332	&	9.6	&	  27.1967	&	27.2020	&	27.1982	&	317	&	12.9&	21	\\
  brock200\_2   	&	14.1310	&	14.1371	&	14.1324	&	199	&	5.9	&	  14.1310	&	14.1380	&	14.1320	&	231	&	10.5&	12	\\
  brock200\_3   	&	18.6718	&	18.6771	&	18.6734	&	226	&	6.8	&	  18.6718	&	18.6778	&	18.6729	&	286	&	12.2&	15	\\
  brock200\_4   	&	21.1210	&	21.1257	&	21.1248	&	260	&	8.4	&	  21.1211	&	21.1269	&	21.1223	&	222	&	9.8&	17	\\
  c\_fat200\_1  	&	11.9999	&	12.0025	&	12.0004	&	261	&	8.6	&	  12.0001	&	12.0004	&	12.0004	&	165	&	7.8&	12	\\
  c\_fat200\_2  	&	24.0017	&	24.0017	&	24.0017	&	1686	&	54.2	&	  23.9981	&	24.0000	&	24.0002	&	974	&	45.3&	24	\\
  c\_fat200\_5  	&	60.3461	&	60.3461	&	60.3461	&	1469	&	45.0	&	  60.3444	&	60.3481	&	60.3463	&	996	&	46.2&	58	\\
san200\_0\_7\_1 	&	30.0019	&	30.0019	&	30.0019	&	2039	&	65.6	&	  30.0019	&	30.0022	&	30.0019	&	1161	&	47.7&	30	\\
san200\_0\_7\_2 	&	17.9991	&	18.0001	&	18.0012	&	8265	&	246.6	&	  17.9421	&	18.0121	&	18.0131	&	87753	&	$\geq 3600$	&	18	\\
san200\_0\_9\_1 	&	70.0019	&	70.0019	&	70.0019	&	2472	&	77.8	&	  69.9980	&	70.0000	&	70.0001	&	1113	&	40.9&	70	\\
san200\_0\_9\_2 	&	60.0017	&	60.0017	&	60.0017	&	2340	&	74.3	&	  59.9980	&	60.0000	&	60.0001	&	1098	&	40.7&	60	\\
san200\_0\_9\_3 	&	43.9983	&	44.0000	&	44.0000	&	11443	&	342.1	&	  43.9984	&	44.0005	&	44.0002	&	10892	&	401.7&	44	\\
  san200\_0\_7  	&	23.6332	&	23.6370	&	23.6364	&	311	&	9.7	&	  23.6333	&	23.6390	&	23.6347	&	323	&	12.1&	18	\\
  san200\_0\_9  	&	48.9044	&	48.9070	&	48.9084	&	686	&	21.5	&	  48.9046	&	48.9049	&	48.9050	&	1659	&	61.1&	42	\\
  hamming8\_2   	&	127.998	&	128.002	&	128.000	&	4163	&	245.5	&	  11.9202	&	434.728	&	Inf	&	52831	&	$\geq 3600$	&	128	\\
  hamming8\_4   	&	15.9997	&	16.0119	&	16.0006	&	168	&	9.8	&	  15.9998	&	16.0087	&	15.9999	&	266	&	18.5&	16	\\
  p\_hat300\_1  	&	10.0203	&	10.0356	&	10.0213	&	404	&	31.6	&	  10.0203	&	10.0339	&	10.0211	&	463	&	46.3&	8	\\
  p\_hat300\_2  	&	26.7141	&	26.7146	&	26.7142	&	3425	&	261.5	&	  26.7167	&	26.7233	&	26.7197	&	1102	&	107.2&	25	\\
  p\_hat300\_3  	&	40.7008	&	40.7098	&	40.7032	&	822	&	63.7	&	  40.7012	&	40.7092	&	40.7043	&	832	&	77.5&	36	\\
    MANN\_a27   	&	132.762	&	132.765	&	132.763	&	7179	&	914.3	&	  26.9452	&	487.963	&	Inf	&	22536	&	$\geq 3600$	&	126	\\
  brock400\_1   	&	39.3308	&	39.3434	&	39.3381	&	528	&	80.7	&	  39.3309	&	39.3455	&	39.3331	&	331	&	61.1&	27	\\
  brock400\_2   	&	39.1964	&	39.2079	&	39.2024	&	532	&	79.9	&	  39.1965	&	39.2113	&	39.1987	&	367	&	67.4&	29	\\
  brock400\_3   	&	39.1602	&	39.1734	&	39.1675	&	526	&	81.4	&	  39.1604	&	39.1749	&	39.1628	&	335	&	61.7&	31	\\
  brock400\_4   	&	39.2313	&	39.2446	&	39.2361	&	515	&	78.6	&	  39.2314	&	39.2459	&	39.2339	&	370	&	69.3&	33	\\
san400\_0\_5\_1 	&	13.0015	&	13.0015	&	13.0015	&	5691	&	853.4	&	  13.0035	&	13.0056	&	13.0035	&	5878	&	1152.4&	13	\\
san400\_0\_7\_1 	&	39.9966	&	40.0000	&	40.0000	&	3961	&	601.7	&	  39.9961	&	40.0000	&	40.0001	&	2663	&	498.9&	40	\\
san400\_0\_7\_2 	&	29.9991	&	30.0000	&	30.0004	&	6704	&	996.0	&	  29.9967	&	30.0020	&	30.0000	&	4459	&	823.8&	30	\\
san400\_0\_7\_3 	&	22.0000	&	22.0063	&	22.0009	&	962	&	147.5	&	  22.0000	&	22.0110	&	22.0015	&	253	&	46.1&	22	\\
san400\_0\_9\_1 	&	100.003	&	100.003	&	100.003	&	4803	&	719.6	&	  99.9961	&	100.000	&	100.000	&	2520	&	440.6&	100	\\
 sanr400\_0\_5  	&	20.1782	&	20.1934	&	20.1837	&	282	&	42.6	&	  20.1782	&	20.1975	&	20.1797	&	332	&	66.4&	13	\\
 sanr400\_0\_7  	&	33.9666	&	33.9790	&	33.9726	&	434	&	66.1	&	  33.9666	&	33.9825	&	33.9685	&	347	&	63.2&	21	\\
johnson32\_2\_4 	&	16.0000	&	16.0000	&	16.0000	&	769	&	194.9	&	  16.0000	&	16.0309	&	16.0006	&	93	&	29.0&	16	\\
 c\_fat500\_10  	&	125.997	&	126.003	&	126.002	&	6666	&	1729.1	&	  168.342	&	170.112	&	168.580 &	10895	&	$\geq 3600$	&	126	\\
  c\_fat500\_1  	&	13.9997	&	14.0050	&	14.0007	&	478	&	132.3	&	  13.9995	&	14.0063	&	14.0005	&	300	&	103.5&	14	\\
  c\_fat500\_2  	&	26.0000	&	26.0034	&	26.0012	&	717	&	192.3	&	  25.9994	&	26.0076	&	26.0009	&	384	&	134.7&	26	\\
  c\_fat500\_5  	&	63.9975	&	64.0029	&	64.0020	&	3752	&	966.2	&	  64.0027	&	64.0030	&	64.0027	&	1684	&	562.4&	64	\\
  p\_hat500\_1  	&	13.0081	&	13.0401	&	13.0103	&	449	&	118.6	&	  13.0080	&	13.0383	&	13.0092	&	631	&	208.9&	9	\\
  p\_hat500\_2  	&	38.5599	&	38.5606	&	38.5600	&	4417	&	1138.1	&	  38.5649	&	38.5767	&	38.5891	&	1356	&	437.8&	36	\\
  p\_hat500\_3  	&	57.8119	&	57.8307	&	57.8198	&	1312	&	340.3	&	  57.8137	&	57.8320	&	57.8457	&	1128	&	349.1&	 $\geq 50$     	\\
  p\_hat700\_1  	&	15.0453	&	15.0996	&	15.0475	&	492	&	314.4	&	  15.0454	&	15.0948	&	15.0504	&	756	&	593.9&	11	\\
  p\_hat700\_2  	&	48.4405	&	48.4416	&	48.4407	&	4971	&	3074.1	&	  48.4470	&	48.4608	&	48.4739	&	1765	&	1355.9&	44	\\
  p\_hat700\_3  	&	71.7561	&	71.7819	&	71.7968	&	2256	&	1413.6	&	  71.7635	&	71.7882	&	71.8177	&	1372	&	1018.2&	62	\\
     keller5    	&	30.9956	&	31.0441	&	30.9980	&	2301	&	1988.5	&	  30.9957	&	31.0352	&	30.9958	&	3470	&	$\geq 3600$	&	27	\\
  brock800\_1   	&	41.8673	&	41.9094	&	41.8704	&	805	&	836.6	&	  41.8674	&	41.9137	&	41.8675	&	370	&	436.4&	23	\\
  brock800\_2   	&	42.1043	&	42.1456	&	42.1069	&	808	&	849.2	&	  42.1043	&	42.1512	&	42.1044	&	382	&	449.6&	24	\\
  brock800\_3   	&	41.8825	&	41.9255	&	41.8903	&	801	&	839.9	&	  41.8826	&	41.9291	&	41.8826	&	374	&	443.5&	25	\\
  brock800\_4   	&	42.0006	&	42.0433	&	42.0058	&	805	&	846.1	&	  42.0006	&	42.0470	&	42.0007	&	375	&	444.1&	26	\\
 p\_hat1000\_1  	&	17.5223	&	17.5606	&	17.5261	&	773	&	2124.9	&	  17.5226	&	17.6075	&	17.5242	&	1071	&	3400.0&	 $\geq 10$     	\\
 p\_hat1000\_2  	&	54.7944	&	55.5397	&	56.0385	&	1317	&	$\geq 3600$	&	54.8680	&	54.9231	&	54.9417	&	1168	&	$\geq 3600$	&	 $\geq 46$     	\\
 p\_hat1000\_3  	&	7663.81	&	7663.82	&	7770.75	&	1327	&	$\geq 3600$	&	83.5456	&	83.5930	&	83.6149	&	1188	&	$\geq 3600$	&	 $\geq 68$     	\\
     san1000    	&	81.8129	&	82.3795	&	84.5386	&	1282	&	$\geq 3600$	&	15.0315	&	15.1395	&	15.0506	&	1144	&	$\geq 3600$	&	15	\\
 hamming10\_2   	&	314167	&	314167	&	314167	&	1154	&	$\geq 3600$	&	65.1578	&	2179.24	&	Inf	&	1024	&	$\geq 3600$	&	512	\\
 hamming10\_4   	&	1653.80	&	1653.8	&	1653.80	&	1117	&	$\geq 3600$	&	42.6662	&	42.7279	&	42.6680	&	265	&	881.6&	40	\\
    MANN\_a45   	&	181901	&	181901	&	181901	&	1159	&	$\geq 3600$	&	36.1173	&	1117.01	&	Inf	&	1061	&	$\geq 3600$	&	345	\\
 p\_hat1500\_1  	&	2850.61	&	2850.61	&	2850.61	&	280	&	$\geq 3600$	&	22.0204	&	22.6797	&	22.0222	&	262	&	$\geq 3600$	&	12	\\
 p\_hat1500\_2  	&	21212.1	&	21212.1	&	21212.1	&	280	&	$\geq 3600$	&	76.7168	&	78.3033	&	76.7352	&	268	&	$\geq 3600$	&	65	\\
 p\_hat1500\_3  	&	41630.4	&	41630.4	&	41630.4	&	282	&	$\geq 3600$	&	114.243 &	116.135	&	114.301 &	271	&	$\geq 3600$	&	94	\\
    MANN\_a81   	&	388.942	&	3282.73	&	2205.46	&	25	&	$\geq 3600$	&	93.4902	&	3269.63	&	Inf	&	24	&	$\geq 3600$	&	1100	\\
     keller6    	&	1902.86	&	1902.86	&	   Inf 	&	22	&	$\geq 3600$	&	181.517	&	184.845	&	197.191	&	24	&	$\geq 3600$	&	 $\geq 59$ 	\\
 \hline
\end{longtable}
}
\end{center}

\begin{figure}[ht]
  \begin{center}
   \includegraphics[trim={1.5cm 0.5cm 1cm 0},clip,width=0.48\textwidth]{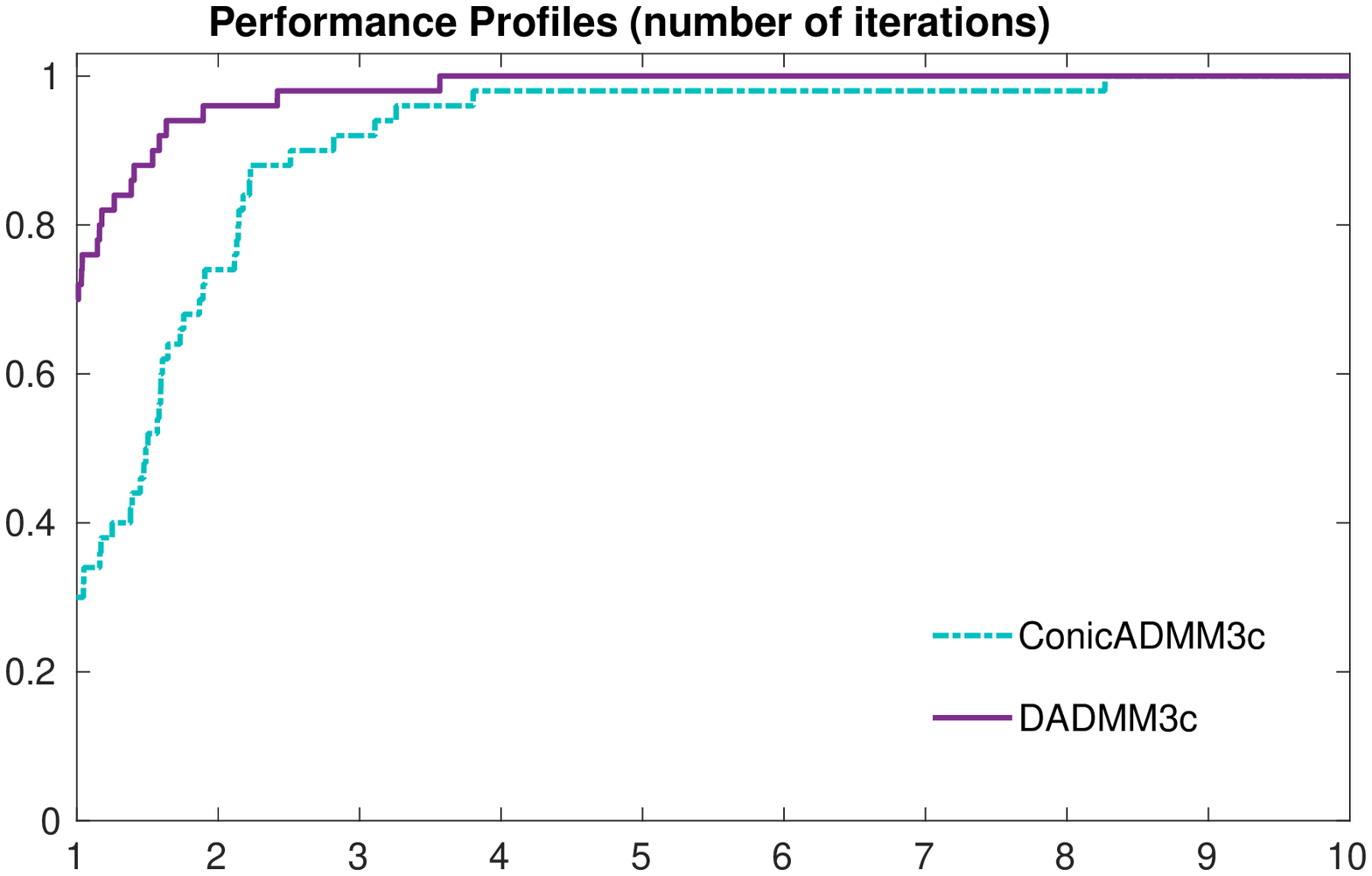} \includegraphics[trim={1.5cm 0.5cm 1cm 0},clip,width=0.48\textwidth]{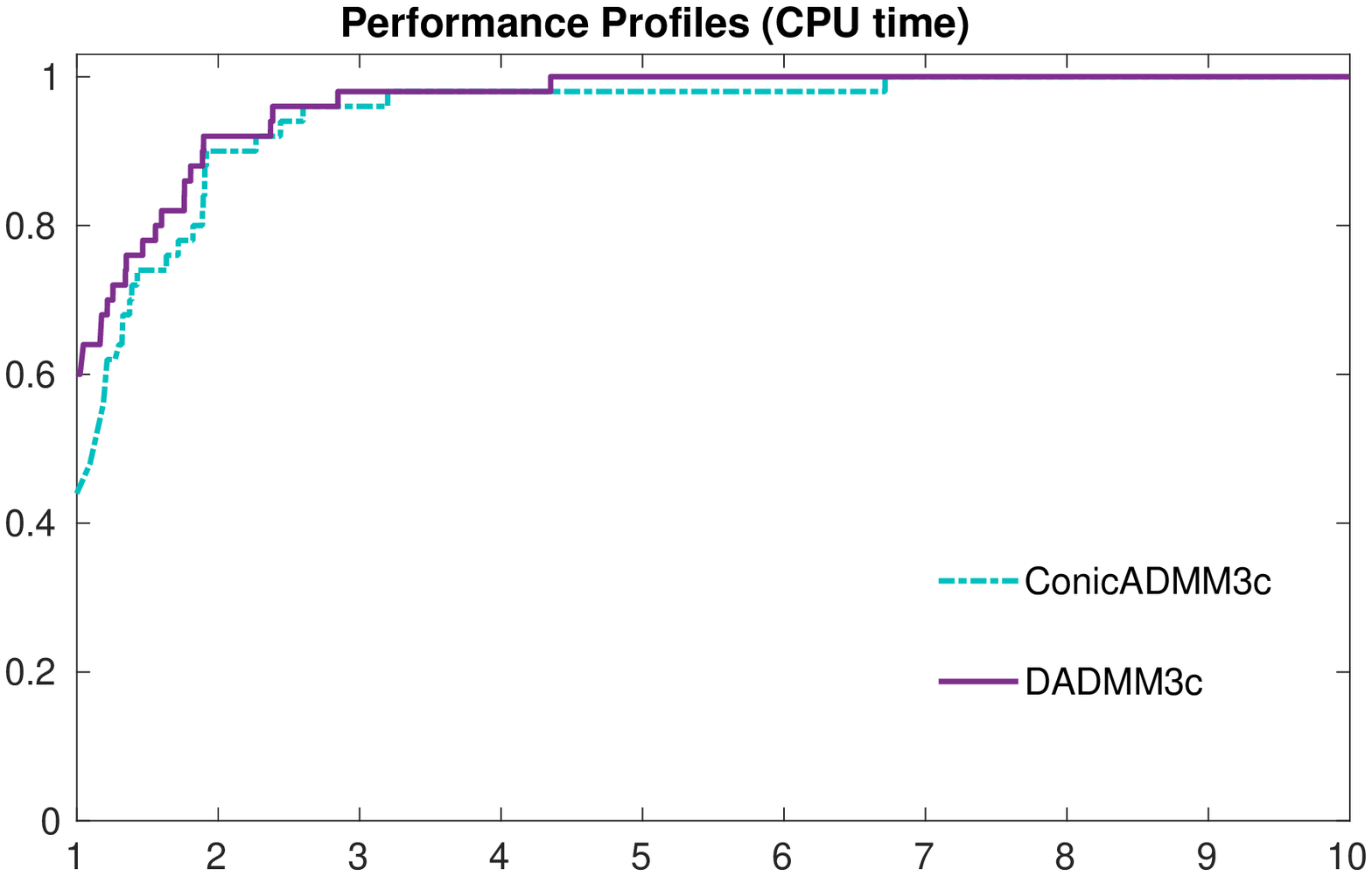}
  \end{center}
  \caption{Comparison between \texttt{ConicADMM3c} and \texttt{DADMM3c} on DIMACS instances~\cite{Johnson:1996}.}
  \label{fig:PP-ADMM}
\end{figure}

\section{Conclusions}\label{sec:conclusions}
In this paper we propose to use a factorization of the dual matrix within two 
ADMMs for conic programming proposed 
in the literature. 
In particular we use a first order update of the dual variables in order 
to improve the performance of the ADMMs considered.

Our computational results on instances from a DNN relaxation of the stable set problem show 
that the factorization employed gives a significant improvement in the 
efficiency of the methods. We are confident that this can be the case also when 
dealing with other structured DNNs. 
In particular, we experience a drastic reduction in 
terms of number of iterations.
The performance of \texttt{DADMM3c} may even further improve through a smart 
update of the rank of $Z$ along the iterations. This is a topic for future 
investigation.

In the paper we also focus on how to obtain bounds on the primal optimal 
objective function value, since the dual objective function value obtained when using first order 
methods to solve DNNs is not always guaranteed to serve as bound, as the dual solution may be infeasible. 
We present two methods: one that adds a sufficient (negative) perturbation to the dual objective function value (error bounds) and one that constructs a dual feasible solution (Nightjet procedure). 
Both methods are computationally cheap and produce bounds close to the optimal objective function value of the DNN if the obtained solution is close to the optimal solution. 
The Nightjet procedure works particularly well for structured instances, like computing $\vartheta_+$, but comes with the drawback that it might fail to produce a feasible solution. However, as long as the dual solution is reasonably close to the (unknown) optimal solution, this does not happen. 
We also observe that the Nightjet procedure works particularly well after \texttt{ADAL+} and \texttt{DADAL+}. This is due to the fact that in these algorithms the dual matrix (which is the input for the Nightjet procedure) is positive semidefinite by construction.
The two versions of the post-processing make our methods applicable 
within branch-and-bound frameworks in order to solve combinatorial 
optimization problems with DNN relaxations.  

Our plan for future research is to apply the methods to other structured DNN relaxations. Furthermore, we will expand our methods to solve SDPs with general inequality constraints instead of just nonnegativity.

\ifFourOR 
\begin{acknowledgements}
\else 
\section*{Acknowledgements}
\fi 

We thank Kim-Chuan Toh for bringing our attention to~\cite{JaChayKeil2007} and 
for providing an implementation of the method therein.
We also thank the Nightjet NJ 40233 from Klagenfurt to Roma Termini on November 12, 
2019 for being one hour delayed; this helped Elisabeth Gaar to focus and set 
the ground stone for the Nightjet procedure. 
We also thank two anonymous referees for their valuable comments that helped us to improve the paper.
\ifFourOR 
\end{acknowledgements}
\else 
\fi 

\ifFourOR 

%
\section*{Conflict of interest}
The authors declare that they have no conflict of interest.


\bibliographystyle{spmpsci} 

\else 
\bibliographystyle{plain} 
\fi 

\bibliography{DADALplus}

\end{document}